\newtheorem{thm}{Theorem}[section]
\newtheorem{lem}[thm]{Lemma}
\newtheorem{cor}[thm]{Corollary}
\theoremstyle{definition}
\newtheorem{definition}[thm]{Definition}
\newtheorem*{cexmp*}{Counterexample}
\newcommand \be {\begin{equation}}
\newcommand \ee {\end{equation}}
\newcommand \ben {\begin{equation*}}
\newcommand \een {\end{equation*}}
\newcommand \bea {\begin{eqnarray}}
\newcommand \eea {\end{eqnarray}}
\newcommand \bean {\begin{eqnarray*}}
\newcommand \eean {\end{eqnarray*}}
\newcommand \NN {\mathbb{N}}
\newcommand \RR {\mathbb{R}}
\newcommand \Rcal {\mathcal{R}}
\DeclareMathOperator*{\argmin}{argmin}
\DeclareMathOperator{\sign}{sign}
\DeclareMathOperator{\dom}{dom}
\DeclareMathOperator{\prox}{prox}
\newcommand{\scp}[2]{\langle #1 \vert #2\rangle}
\newcommand{\norm}[2][]{\|#2\|_{#1}}
\newcommand{\set}[2]{\{#1\, |\, #2\}}
\begin{document}

\title{The Linearized Bregman Method via Split Feasibility Problems:
  Analysis and Generalizations}

\author{Dirk~A. Lorenz\thanks{Institute for Analysis and Algebra, TU Braunschweig, 38092 Braunschweig, Germany, \texttt{d.lorenz@tu-braunschweig.de}} \and Frank~Sch\"{o}pfer\thanks{Institut f\"ur Mathematik, Carl von Ossietzky Universit\"at Oldenburg, 26111 Oldenburg, Germany, \texttt{frank.schoepfer@uni-oldenburg.de}} \and Stephan Wenger\thanks{Institute for Computer Graphics, TU Braunschweig, 38092 Braunschweig, Germany, \texttt{wenger@cg.cs.tu-bs.de}}}

\maketitle

\begin{abstract}
  The linearized Bregman method is a method to calculate sparse solutions to systems of linear equations.
  We formulate this problem as a split feasibility problem, propose an algorithmic framework based on Bregman projections and prove a general convergence result for this framework.
  Convergence of the linearized Bregman method will be obtained as a special case.
  Our approach also allows for several generalizations such as other objective functions, incremental iterations, incorporation of non-gaussian noise models or box constraints.
\end{abstract}

\noindent
\textbf{Keywords:} linearized Bregman method, split feasibility problems, Bregman projections, sparse solutions

\noindent
\textbf{AMS classification:} 68U10, 65K10, 90C25

\section{Introduction}
\label{sec:intro}
We consider $A\in\RR^{m\times n}$, with $m<n$ and full rank, $b\in\RR^m$ and aim at solutions of the underdetermined linear system $Ax=b$.
The least squares solution is obtained as the solution with minimal 2-norm while it is known that sparse solutions are obtained as solutions with minimal 1-norm, an approach that has been coined \emph{Basis Pursuit} in~\cite{CDS98}.
The linearized Bregman method, introduced in~\cite{YOGD08}, solves a regularized version of the Basis Pursuit problem with regularization parameter $\lambda >0$
\be
\label{eq:sparse-lin-sys}
\min_{x\in\RR^n} \lambda\norm[1]{x} + \tfrac{1}{2}\norm[2]{x}^2 \quad\text{s.t.} \quad Ax=b.
\ee
It consists of the simple iteration
\bean
\label{eq:linbreg}
x^k &=& S_\lambda(v^{k-1})\\
v^k &=& v^{k-1} - A^T(Ax^k - b)
\eean
initialized with $x^0=v^0=0$, where $S_\lambda (x) = \min(|x|-\lambda,0)\sign(x)$ is the component-wise soft shrinkage. 
Convergence of the method has been analyzed in~\cite{COS09} and~\cite{Yin10}.
The method has been derived from the Bregman iteration~\cite{OBGXY05} and also identified as a gradient descent for the dual problem of~\eqref{eq:sparse-lin-sys} in~\cite{Yin10}.

In this paper we provide a new convergence proof by phrasing~\eqref{eq:sparse-lin-sys} as a \emph{split feasibility problem}~\cite{CE94}.
In this framework the linearized Bregman method will be a special case of a broader class of methods including other popular methods like the Landweber method~\cite{Lan51} or the Kaczmarz method~\cite{K37}.
Given a finite number of convex sets $C_i\in\RR^n$, $i=1,\dots,k_C$, $Q_j\in\RR^{m_j}$ and matrices $A_j\in\RR^{m_j \times n}$, $j=1,\dots,n_Q$, a split feasibility problem is to find a vector $x\in\RR^n$ such that
\be
\label{eq:sfp}
x\in C := \set{x\in\RR^n}{x\in C_i,\ i=1,\dots,n_C\ \text{and}\ A_j x\in Q_j,\ j=1,\dots,n_Q}.
\ee
Numerous iterative methods have been proposed to solve~\eqref{eq:sfp} which are based on successive orthogonal projections onto the individual sets $C_i$ and $Q_j$~\cite{Com96,BB96,Byr02,ZY05}.
By employing the more general \emph{Bregman projections} with respect to some given function $f:\RR^n \to \RR$ one can steer the iterates in such a way that their \emph{Bregman distance} with respect to $f$ is decreasing~\cite{BB97,BBC03,Bre67}.
In some instances the iterates then even converge to solutions of the more ambitious problem
\be
\label{eq:f-sfp}
\min_{x\in\RR^n} f(x)\quad\text{s.t.}\quad x\in C
\ee
with the above set $C$.
Bregman projections are also used to solve feasibility problems in infinite-dimensional Banach spaces~\cite{AB97,SSL08b}.

Formally we can phrase the constraint $Ax=b$ of problem~\eqref{eq:sparse-lin-sys} in the form~\eqref{eq:sfp} in at least two different ways:
\begin{itemize}
\item We set $n_C=0$ (i.e. there is no set $C_i$) and $n_Q=1$
  and use $A_1=A$ and $Q = \{b\}$.
\item We set $n_Q=0$ (i.e. there is no set $Q_j$), $n_C=m$ and the sets $C_i$ are the hyperplanes
  \ben
  C_i = \set{x\in\RR^n}{\scp{a_i}{x} = b_i}
  \een
  given by the rows $a_i$ of $A$ and the $i$th components $b_i$ of $b$.
\end{itemize}
We will see that an iterative method employing Bregman projections with respect to $f(x)=\lambda\norm[1]{x} + \tfrac{1}{2}\norm[2]{x}^2$ in the first formulation corresponds to the linearized Bregman method, whereas the second formulation will lead to a kind of ``sparse'' Kaczmarz method.
Moreover it is easy to incorporate additional prior knowledge like positivity as a convex constraint $C_i := \set{x\in\RR^n}{x \ge 0}$, or to deal with noisy data $b^\delta$ by setting $Q_j:=\set{y\in\RR^n}{\|y-b^\delta\| \le \delta}$, where the norm is adapted to the noise.
However, previous convergence results of iterative methods employing Bregman projections with respect to $f$ require $f$ to be smooth.
Hence the case $f(x) = \lambda\norm[1]{x} + \tfrac{1}{2}\norm[2]{x}^2$ is not yet covered.
In section~\ref{sec:BPSFP} we extend the convergence analysis to functions $f$ that are only required to be continuous and strongly convex.
Convergence of the linearized Bregman method and its extensions will then be obtained as a special case.
In section~\ref{sec:num-experiments} we illustrate the performance of the method for sparse recovery problems with noisy data, for the problem of three dimensional reconstruction of planetary nebulae and for a tomographic reconstruction problem with several constraints.

\section{Bregman projection algorithms for split feasibility problems}
\label{sec:BPSFP}
In this section we state the framework for the solution of~\eqref{eq:sfp}, ~\eqref{eq:f-sfp} and prove convergence of the derived algorithms.

\subsection{Basic assumptions and notions}
\label{sec:assumptions}

Let $f:\RR^n \to \RR$ be continuous and convex with conjugate function $f^*:\RR^n \to \RR$,
\be
f^*(x^*)= \sup_{x \in \RR^n} \scp{x^*}{x} - f(x). \label{eq:conjugate}
\ee
Since $\dom(f)=\RR^n$ the conjugate function $f^*$ is coercive, i.e.
\be
\lim_{\|x^*\|_2 \to \infty} \frac{f^*(x^*)}{\|x^*\|_2}= \infty. \label{eq:coercive}
\ee
By $\partial f(x)$ we denote the subdifferential of $f$ at $x \in \RR^n$,
\be
\partial f(x) = \set{x^* \in \RR^n}{ f(y) \ge f(x) + \scp{x^*}{y-x}\quad \mbox{for all} \quad y \in \RR^n }. \label{eq:subdiff}
\ee
We assume that $f$ is strongly convex, i.e. there exists some constant $\alpha>0$ such that
\be
f(y) \ge f(x) + \scp{x^*}{y-x} + \frac{\alpha}{2} \norm[2]{y-x}^2 \quad \mbox{for all} \quad x^* \in \partial f(x). \label{eq:strongly convex}
\ee
Especially $f$ is strictly convex.
Note that strong convexity also implies coercivity of $f$.
Furthermore, by~\cite[Prop.~12.60]{RW09}, the conjugate function $f^*$ is differentiable with a Lipschitz-continuous gradient,
\be
\norm[2]{\nabla f^*(x^*)-\nabla f^*(y^*)} \le \frac{1}{\alpha}\norm[2]{x^*-y^*} \quad \mbox{for all} \quad x^*,y^* \in \RR^n. \label{eq:Lipschitz} 
\ee
and the following inequality holds
\be
f^*(y^*) \le f^*(x^*) + \scp{\nabla f^*(x^*)}{y^*-x^*} + \frac{1}{2 \alpha} \norm[2]{y^*-x^*}^2 \quad \mbox{for all} \quad x^*,y^* \in \RR^n. \label{eq:remainder_inequality}
\ee
The \textit{Bregman distance} (cf.~\cite{BR06}) $D^{x^*}(x,y)$ between $x,y \in \RR^n$ with respect to $f$ and a subgradient $x^* \in \partial f(x)$ is defined as,
\be
D^{x^*}(x,y):=f(y)-f(x) -\scp{x^*}{y - x}\label{eq:bregman-dist}
\ee
Note that for $f(x)=\frac{1}{2}\norm[2]{x}^2$ we just have $D^{x^*}(x,y)=\frac{1}{2}\norm[2]{x-y}^2$.
In general $D^{x^*}$ is not a distance function in the usual sense, as it is in general neither necessarily symmetric, nor positive definite and does not have to obey a (quasi-)triangle inequality.
Nevertheless it has some distance-like properties which we state in the following lemma.
They are immediately clear from our basic assumptions.

\begin{lem} \label{lem:D}
Let $f$ fulfill the basic assumptions stated above.
For all $x,y \in \RR^n$ and $x^* \in \partial f(x)$ we have
\[
D^{x^*}(x,y) \: \ge \:  \frac{\alpha}{2} \norm[2]{x-y}^2 \:\ge \: 0
\]
and
\[
D^{x^*}(x,y) = 0 \quad \Leftrightarrow \quad x=y.
\]
For $x,x^*$ fixed the function $y \mapsto D^{x^*}(x,y)$ is continuous, coercive and strongly convex with
\[\partial_y D^{x^*}(x,y) = \partial f(y)-x^*.
\]
\end{lem}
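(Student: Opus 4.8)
The plan is to read off every claim directly from the definition~\eqref{eq:bregman-dist} of the Bregman distance together with the strong convexity assumption~\eqref{eq:strongly convex}; as the surrounding text already hints, there is essentially no hard step, only bookkeeping, and the only non-trivial appeal will be to the subdifferential sum rule at the very end.

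First I would establish the two-sided estimate. Expanding $D^{x^*}(x,y) = f(y) - f(x) - \scp{x^*}{y-x}$ and subtracting the strong-convexity inequality~\eqref{eq:strongly convex}, which applies precisely because $x^* \in \partial f(x)$, yields $D^{x^*}(x,y) \ge \frac{\alpha}{2}\norm[2]{y-x}^2$, and this is nonnegative since $\alpha>0$. For the equivalence: if $x=y$ then $D^{x^*}(x,x) = f(x)-f(x)-\scp{x^*}{0} = 0$ immediately; conversely $D^{x^*}(x,y)=0$ forces $\frac{\alpha}{2}\norm[2]{x-y}^2 \le 0$ by the estimate just proved, hence $\norm[2]{x-y}=0$, i.e. $x=y$.

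Next I would fix $x$ and $x^*$ and analyse $g := D^{x^*}(x,\cdot)$. By~\eqref{eq:bregman-dist} we have $g(y) = f(y) - \bigl(f(x) + \scp{x^*}{y-x}\bigr)$, so $g$ is $f$ minus an affine function of $y$. Continuity of $g$ is inherited from continuity of $f$. Coercivity follows from the lower bound above, since $g(y) \ge \frac{\alpha}{2}\norm[2]{y-x}^2 \ge \frac{\alpha}{2}(\norm[2]{y}-\norm[2]{x})^2 \to \infty$ as $\norm[2]{y}\to\infty$. Strong convexity of $g$ with the same constant $\alpha$ is immediate because subtracting an affine function from $f$ leaves the quadratic remainder term in~\eqref{eq:strongly convex} unchanged; if one prefers, one verifies~\eqref{eq:strongly convex} for $g$ at a point $z$ with subgradient $w^*-x^*$, $w^*\in\partial f(z)$, by inserting the strong-convexity inequality for $f$ at $z$ with subgradient $w^*$ and cancelling the linear terms.

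Finally, for the subdifferential I would invoke the sum rule: $g$ is the sum of the (finite, convex) function $f$ and the differentiable affine map $y\mapsto -f(x)-\scp{x^*}{y-x}$ whose gradient is the constant $-x^*$. Since $\dom f = \RR^n$ there is no constraint-qualification obstruction, so $\partial_y D^{x^*}(x,y) = \partial f(y) + \{-x^*\} = \partial f(y) - x^*$. This appeal to the sum rule with one smooth summand is the only place where more than a one-line argument is used, and even it is a textbook fact, so I do not expect any genuine obstacle in the proof.
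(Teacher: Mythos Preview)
Your proof is correct and is exactly the kind of direct verification the paper has in mind: the authors give no detailed argument, writing only that the properties ``are immediately clear from our basic assumptions.'' You have simply made explicit the bookkeeping they omit, using the strong-convexity inequality~\eqref{eq:strongly convex} and the subdifferential sum rule in the expected way.
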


Closely related to the Bregman distance is the following function $\Delta$ which involves $f^*$,
\be
\Delta(x^*,x) :=f^*(x^*)-\scp{x^*}{x} + f(x) \quad \mbox{for arbitrary} \quad x^*,x \in \RR^n. \label{eq:Delta}
\ee
Due to the properties of $f$ and $f^*$ the function $\Delta$ is continuous, convex and coercive in both arguments.
By~\cite[Prop.~11.3]{RW09} we have $\Delta(x^*,x) \ge 0$ and
\be
\Delta(x^*,x) = 0 \quad \Leftrightarrow \quad x^* \in \partial f(x) \quad \Leftrightarrow \quad x = \nabla f^*(x^*). \label{eq:Delta-subgradients}
\ee
Therefore, the Bregman distance is related to $\Delta$ by
\be
D^{x^*}(x,y)=\Delta(x^*,y) \quad \mbox{for all} \quad x^* \in \partial f(x). \label{eq:D-and-Delta}
\ee

Note that the assumption of strong convexity is not too severe.
A common way to enforce strong convexity is to regularize the function $f$ by adding a strongly convex functional.
Probably the simplest approach is to use $f(x) + \tfrac{1}{2\lambda}\norm[2]{x}^2$ or $\lambda f(x) + \tfrac12\norm[2]{x}^2$, like in~\eqref{eq:sparse-lin-sys}.
In this case the gradient of the conjugate function involves the proximal mapping of $f$,
\ben
  \nabla(\lambda f + \tfrac12\norm[2]{\cdot}^2)^*(x) =   \prox_{\lambda f}(x) = \argmin_{y \in \RR^n} \lambda f(y) + \tfrac12\norm[2]{x-y}^2 \,.
\een

\subsection{Bregman projections}

Let $C \subset \RR^n$ be a nonempty closed convex set, $x \in \RR^n$ and $x^* \in \partial f(x)$. 
The \textit{Bregman projection} of $x$ onto $C$ with respect to $f$ and $x^*$ is the point $\Pi_C^{x^*}(x) \in C$ such that
\be
D^{x^*}\big(x,\Pi_C^{x^*}(x)\big) = \min_{y \in C} D^{x^*}(x,y) \,. \label{eq:Pi}
\ee
Lemma~\ref{lem:D} guarantees that the Bregman projection exists and is unique.
Note that for $f(x)=\frac{1}{2}\|x\|_2^2$ this is just the orthogonal projection onto $C$.
To distinguish this case we denote the orthogonal projection by $P_C(x)$.
Note that the notation for the Bregman projection does not capture its dependence on the function $f$, which, however, will always be clear from the context.
The next lemma characterizes the Bregman projection by a variational inequality.

\begin{lem} \label{lem:Pi}
A point $z \in C$ is the Bregman projection of $x$ onto $C$ with respect to $f$ and $x^* \in \partial f(x)$ iff there is some $z^* \in \partial f(z)$ such that one of the following equivalent conditions is fulfilled
\be
\scp{z^*-x^*}{y-z} \ge 0 \quad \mbox{for all} \quad y \in C  \label{eq:Pi-variational}
\ee
\be
D^{z^*}(z,y) \le  D^{x^*}(x,y) - D^{x^*}(x,z) \quad \mbox{for all} \quad y \in C \,. \label{eq:Pi-D}
\ee
%\be
%\Delta(z^*,y) \le  \Delta(x^*,y) - \Delta(x^*,z) \quad \mbox{for all} \quad y\in C \,. \label{eq:Pi-Delta}
%\ee
We call any such $z^*$ an \emph{admissible subgradient} for $z=\Pi_C^{x^*}(x)$.
\end{lem}

\begin{proof}
By theorem 3.33 in~\cite{Rus06} a point $z \in C$ minimizes $D^{x^*}(x,y)$ among all $y \in C$ iff there is some $u^* \in \partial_y D^{x^*}(x,z)$ such that
\[
\scp{ u^*}{ y-z}\ge 0 \quad \mbox{for all} \quad y \in C \,.
\]
By lemma~\ref{lem:D} we have $\partial_y D^{x^*}(x,z)= \partial f(z)-x^*$, which yields~\eqref{eq:Pi-variational}.
Equivalence to~\eqref{eq:Pi-D} is straightforward by using the definition of $D$.
\end{proof}

As far as we know lemma~\ref{lem:Pi} and the following lemma~\ref{lem:Pi_affine_space_hyperplane} have not yet been considered for non-differentiable functions $f$---for differentiable  $f$ we also refer to~\cite{BR06, SSL08a}.
Since Bregman projections onto affine subspaces and half-spaces will be the backbone of our algorithms, it is important to know how to compute them efficiently.

\begin{definition}
  \label{def:aff-space-half-space}
  Let $A \in \RR^{m \times n}$ have full row rank, $b \in \RR^m$,
  $0\not=a \in \RR^n$ and $\beta \in \RR$.
  
  By $L(A,b)$ we denote the \emph{affine subspace}
  \[
  L(A,b):=\set{ x \in \RR^n}{ Ax = b } \,,
  \]
  by $H(a,\beta)$ the \emph{hyperplane}
  \[
  H(a,\beta):=\set{ x \in \RR^n}{ \scp{ a}{ x }= \beta }
  \]
  and by $H_{\le}(a,\beta)$ the \emph{half-space}
  \[
  H_{\le}(a,\beta) :=\set{ x \in \RR^n }{ \scp{a}{x} \le \beta } \,.
  \]
\end{definition}

\begin{lem}\label{lem:Pi_affine_space_hyperplane}
  Let $f$ fulfill the basic assumptions and $A$, $b$, $a$ and $\beta$ be as in definition~\ref{def:aff-space-half-space}.
\begin{itemize}
\item[(a)] The Bregman projection of $x \in \RR^n$ onto $L(A,b)$ is
\[
z:=\Pi_{L(A,b)}^{x^*}(x)=\nabla f^*(x^*-A^T \hat{w}) \,,
\]
where $\hat{w}\in \RR^m$ is a solution of
\[
\min_{w \in \RR^m} f^*(x^*-A^T w) + \scp{ w }{ b } \,.
\]
Moreover, an admissible subgradient for $z$ is $z^*:=x^*-A^T \hat{w}$ and for all $y\in L(A,b)$ we have
\be
D^{z^*}(z,y) \le D^{x^*}(x,y) - \frac{\alpha}{2} \norm[2]{(AA^T)^{-\frac{1}{2}}(Ax-b)}^2 \,. \label{eq:D-decreasing-L}
\ee
\item[(b)] The Bregman projection of $x \in \RR^n$ onto $H(a,\beta)$ is
\[
z := \Pi_{H(a,\beta)}^{x^*}(x)=\nabla f^*(x^*- \hat{t} \cdot a) \,,
\]
where $\hat{t} \in \RR$ is a solution of
\be
\label{eq:exact-linesearch-BP-halfspace}
\min_{t \in \RR} f^*(x^*-t  \cdot a) + t \cdot \beta \,.
\ee
Moreover, an admissible subgradient for $\Pi_{H(a,\beta)}^{x^*}(x)$ is $x^* - \hat{t} \cdot a$ and for all $y\in H(a,\beta)$ we have
\be
D^{z^*}(z,y) \le D^{x^*}(x,y) - \frac{\alpha}{2} \frac{(\scp{ a }{ x } - \beta)^2} {\norm[2]{a}^2} \,. \label{eq:D-decreasing-H}
\ee
If $x$ is not contained in $H_{\le}(a,\beta)$ then we necessarily have $\hat{t}>0$ and in this case also $\Pi_{H_{\le}(a,\beta)}^{x^*}(x)=\Pi_{H(a,\beta)}^{x^*}(x)$.
\end{itemize}
\end{lem}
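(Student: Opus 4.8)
The plan is to handle part (a) by Lagrangian duality and then obtain part (b), including the half-space statement, as the one-dimensional special case $A=a^{T}$, $b=\beta$ with one extra argument. Throughout I will use only the conjugate-function machinery collected in Section~\ref{sec:assumptions}.

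For part (a): since $f(x)$ and $\scp{x^*}{x}$ do not depend on $y$, computing $\Pi^{x^*}_{L(A,b)}(x)$ amounts to minimizing $y\mapsto f(y)-\scp{x^*}{y}$ subject to $Ay=b$. Forming the Lagrangian $L(y,w)=f(y)-\scp{x^*}{y}+\scp{w}{Ay-b}$ and carrying out the inner infimum over $y$ produces, via the definition~\eqref{eq:conjugate} of $f^*$, the candidate dual problem $\min_w\phi(w)$ with $\phi(w):=f^*(x^*-A^Tw)+\scp{w}{b}$, which is exactly the problem in the statement. I would check that $\phi$ is convex, continuous and coercive: since $A$ has full row rank, $\norm[2]{A^Tw}\to\infty$ as $\norm[2]{w}\to\infty$, and then the superlinear growth~\eqref{eq:coercive} of $f^*$ dominates the linear term $\scp{w}{b}$; hence a minimizer $\hat w$ exists. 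Because $f^*$ is differentiable~\eqref{eq:Lipschitz}, $\phi$ is differentiable and $\nabla\phi(\hat w)=-A\nabla f^*(x^*-A^T\hat w)+b=0$. Setting $z:=\nabla f^*(x^*-A^T\hat w)$ this says $Az=b$, i.e. $z\in L(A,b)$; and with $z^*:=x^*-A^T\hat w$, relation~\eqref{eq:Delta-subgradients} gives $z^*\in\partial f(z)$. Finally $\scp{z^*-x^*}{y-z}=-\scp{\hat w}{A(y-z)}=-\scp{\hat w}{b-b}=0$ for every $y\in L(A,b)$, so the variational inequality~\eqref{eq:Pi-variational} of Lemma~\ref{lem:Pi} holds (with equality) and $z=\Pi^{x^*}_{L(A,b)}(x)$ with admissible subgradient $z^*$.

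For the descent estimate~\eqref{eq:D-decreasing-L}, I would start from~\eqref{eq:Pi-D}, which already gives $D^{z^*}(z,y)\le D^{x^*}(x,y)-D^{x^*}(x,z)$ for $y\in L(A,b)$, so it remains to prove $D^{x^*}(x,z)\ge\tfrac{\alpha}{2}\norm[2]{(AA^T)^{-1/2}(Ax-b)}^2$. Using $x^*\in\partial f(x)$ and~\eqref{eq:D-and-Delta}, the Fenchel--Young equality for the pair $(z,z^*)$, and $Az=b$, a short computation gives the clean identity $D^{x^*}(x,z)=\phi(0)-\phi(\hat w)=\max_w\big(\phi(0)-\phi(w)\big)$. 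Now for every $w$, the remainder inequality~\eqref{eq:remainder_inequality} applied to $f^*$ at $x^*$ with increment $-A^Tw$, together with $\nabla f^*(x^*)=x$, yields $\phi(0)-\phi(w)\ge\scp{Ax-b}{w}-\tfrac{1}{2\alpha}\norm[2]{A^Tw}^2$; maximizing this concave quadratic in $w$ (its maximizer is $w=\alpha(AA^T)^{-1}(Ax-b)$) produces precisely $\tfrac{\alpha}{2}\norm[2]{(AA^T)^{-1/2}(Ax-b)}^2$, which is the required bound. Part (b) is the case $m=1$, $A=a^T$, $b=\beta$: then $AA^T=\norm[2]{a}^2$, the dual reduces to the scalar line search~\eqref{eq:exact-linesearch-BP-halfspace}, and~\eqref{eq:D-decreasing-H} is a direct specialization of~\eqref{eq:D-decreasing-L}. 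For the half-space claim I would observe that $g(t):=f^*(x^*-ta)+t\beta$ has $g'(0)=\beta-\scp{a}{\nabla f^*(x^*)}=\beta-\scp{a}{x}<0$ when $x\notin H_{\le}(a,\beta)$; since $g$ is convex and coercive (by~\eqref{eq:coercive} and $a\neq0$), its minimizer $\hat t$ must be strictly positive. Then with $z=\nabla f^*(x^*-\hat t a)\in H(a,\beta)$ and $z^*=x^*-\hat t a$, for any $y\in H_{\le}(a,\beta)$ one has $\scp{z^*-x^*}{y-z}=-\hat t(\scp{a}{y}-\beta)\ge0$ because $\hat t>0$ and $\scp{a}{y}\le\beta$; by the uniqueness in Lemma~\ref{lem:Pi} this identifies $z$ as $\Pi^{x^*}_{H_{\le}(a,\beta)}(x)$ as well.

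I expect the main obstacle to be obtaining the \emph{sharp} constant in~\eqref{eq:D-decreasing-L}: a careless application of the remainder inequality only yields $\tfrac{\alpha}{2\norm[2]{A}^2}\norm[2]{Ax-b}^2$, and recovering the weighted norm $\norm[2]{(AA^T)^{-1/2}(\cdot)}$ requires using the geometry correctly — either through the identity $D^{x^*}(x,z)=\max_w\big(\phi(0)-\phi(w)\big)$ and exact maximization of the quadratic lower model as above, or equivalently by first whitening the constraint (replacing $A$ by $(AA^T)^{-1/2}A$, which has orthonormal rows, so that $\phi$ becomes a function whose gradient is $\tfrac1\alpha$-Lipschitz in the Euclidean metric). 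Everything else is bookkeeping with the conjugate-function identities from Section~\ref{sec:assumptions}.
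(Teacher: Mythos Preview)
Your proposal is correct and follows essentially the same route as the paper: set up the dual function $\phi(w)=f^*(x^*-A^Tw)+\scp{w}{b}$, use coercivity and differentiability of $f^*$ to get a minimizer $\hat w$, read off $z$ and $z^*$ from the first-order condition, and then obtain the descent estimate by combining the remainder inequality~\eqref{eq:remainder_inequality} with an optimization over $w$ whose maximizer is $\alpha(AA^T)^{-1}(Ax-b)$. The only organizational difference is that the paper bounds $D^{z^*}(z,y)$ directly via $\Delta(z^*,y)=\phi(\hat w)-\scp{x^*}{y}+f(y)\le \phi(w)-\scp{x^*}{y}+f(y)$, whereas you first invoke~\eqref{eq:Pi-D} and then establish the identity $D^{x^*}(x,z)=\phi(0)-\phi(\hat w)$; these are equivalent rearrangements of the same computation, and your explicit verification of the half-space variational inequality (which the paper leaves implicit) is a welcome addition.
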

\begin{proof}
(a) Note that the function $g:\RR^m \to \RR$ with $g(w)=f^*(x^*-A^T w) + \scp{ w }{ b } $ is convex, differentiable and coercive.
Hence $g$ attains its minimum at some $\hat{w} \in \RR^m$ with
\ben
\nabla g(\hat{w})=0 \quad \Leftrightarrow \quad A\, \nabla f^*(x^*-A^T \hat{w}) = b \,,
\een
which yields $z:=\nabla f^*(x^*-A^T \hat{w}) \in L(A,b)$.
Since $z$ and $z^*:=x^*-A^T \hat{w}\in \partial f(z)$ fulfill the variational inequality~\eqref{eq:Pi-variational} we indeed have $z=\Pi_{L(A,b)}^{x^*}(x)$ and $z^*$ is admissible.
Furthermore by~\eqref{eq:Delta},~\eqref{eq:D-and-Delta} and~\eqref{eq:remainder_inequality} we can estimate for all $y\in L(A,b)$ and $w \in \RR^m$
\bean
D^{z^*}(z,y) &=& f^*(x^*-A^T \hat{w}) + \scp{ \hat{w}}{ b }  - \scp{x^*}{y} + f(y) \\
& \le & f^*(x^*-A^T w) + \scp{ w}{ b}  - \scp{ x^* }{ y} + f(y) \\
& \le & f^*(x^*) - \scp{x}{A^T w} + \frac{1}{2\alpha} \norm[2]{A^T w}^2 + \scp{ w }{b}  - \scp{x^*}{y} + f(y) \\
&=& D^{x^*}(x,y) - \scp{A x -b}{ w} + \frac{1}{2\alpha} \norm[2]{A^T w}^2 
\eean
The right hand side of the above inequality becomes minimal for
\[
\tilde{w}=\alpha \cdot (AA^T)^{-1}(Ax-b)
\]
and by inserting $\tilde{w}$ we arrive at inequality~\eqref{eq:D-decreasing-L}.\\
The assertion (b) for hyperplanes is a corollary to (a).
Now assume that $x$ is not contained in the halfspace $H_{\le}(a,\beta)$.
Since the function $g(t)=f^*(x^*-t  \cdot a) + t \cdot \beta$ is increasing with $g'(0)=\beta - \scp{a}{x} < 0$ we must have $\hat{t}>0$.
\end{proof}

For the special case $f(x)=\lambda \|x\|_1 + \frac{1}{2} \|x\|_2^2$ we can also explicitly compute the Bregman projection onto the positive cone or a box.
The proof is straightforward by checking the variational inequality~\eqref{eq:Pi-variational}.

\begin{lem}\label{lem:Pi_positive_box}
  Let $f(x)=\lambda \|x\|_1 + \frac{1}{2} \|x\|_2^2$.
\begin{itemize}
\item[(a)] Let $C_+=\RR^n_{\ge 0}$ be the positive cone.
Then we have
\[
\Pi_{C_+}^{x^*}(x)=S_\lambda\big(P_{C_+}(x^*)\big) = \left\{ \begin{array}{c@{\quad,\quad}l} x^*_j - \lambda & x^*_j > \lambda \\ 0 & x^*_j \le  \lambda  \end{array} \right.
\]
and $P_{C_+}(x^*)$ is an admissible subgradient for $\Pi_{C_+}^{x^*}(x)$.

\item[(b)] Let $B=\prod_{j=1}^n [a_j,b_j]$ be a box with $0 \in B$.
Then we have
\[
z:=\Pi_B^{x^*}(x)=P_B\big(S_\lambda(x^*)\big)
\]
and an admissible subgradient $z^* \in \partial f(z)$ is given by
\[
z^*_j:= \left\{ \begin{array}{c@{\quad,\quad}l} x^*_j &  a_j \le (S_\lambda(x^*))_j \le b_j \\
																							 b_j+\lambda  & (S_\lambda(x^*))_j > b_j \\
																							 a_j-\lambda  & (S_\lambda(x^*))_j < a_j \end{array} \right.
\]
We may simplify $z^*$ by setting $z^*_j:= 0$ in case $z_j=0$ and either $a_j=0, x^*_j<0$ or $b_j=0, x^*_j>0 $.
\end{itemize}
\end{lem}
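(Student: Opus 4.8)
The plan is to verify, coordinate by coordinate, the variational inequality~\eqref{eq:Pi-variational} characterizing the Bregman projection in Lemma~\ref{lem:Pi}. Since $f(x)=\lambda\|x\|_1+\tfrac12\|x\|_2^2=\sum_{j=1}^n\bigl(\lambda|x_j|+\tfrac12 x_j^2\bigr)$ is separable and both $C_+$ and the box $B$ are Cartesian products of intervals, the problem decouples: it suffices to work with the scalar function $\varphi(t)=\lambda|t|+\tfrac12 t^2$, whose subdifferential is $\partial\varphi(t)=\{t+\lambda\sign(t)\}$ for $t\neq0$ and $\partial\varphi(0)=[-\lambda,\lambda]$. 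For a candidate pair $(z,z^*)$ with $z$ in the constraint set, I must check (i) that $z^*_j\in\partial\varphi(z_j)$ for every $j$, so that $z^*$ is a genuine subgradient of $f$ at $z$, and (ii) that $(z^*_j-x^*_j)(y_j-z_j)\ge0$ for every admissible $y_j$; Lemma~\ref{lem:Pi} then identifies $z$ as the Bregman projection and $z^*$ as an admissible subgradient. No separate argument for existence or uniqueness is needed, as these are already guaranteed by Lemma~\ref{lem:D}.

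For part (a) I take $z_j=\max(x^*_j-\lambda,0)$, which equals $S_\lambda(P_{C_+}(x^*))_j$ since $\lambda>0$, and $z^*_j=\max(x^*_j,0)=P_{C_+}(x^*)_j$. For step (i): if $x^*_j>\lambda$ then $z_j=x^*_j-\lambda>0$ and $\partial\varphi(z_j)=\{x^*_j\}=\{z^*_j\}$; if $x^*_j\le\lambda$ then $z_j=0$ and $z^*_j=\max(x^*_j,0)\in[0,\lambda]\subset\partial\varphi(0)$. For step (ii): when $x^*_j>\lambda$ we have $z^*_j-x^*_j=0$; when $x^*_j\le\lambda$ we have $z_j=0$, $z^*_j-x^*_j=\max(-x^*_j,0)\ge0$ and $y_j-z_j=y_j\ge0$, so the product is nonnegative.

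For part (b) I write $s_j:=S_\lambda(x^*_j)$ and $z_j:=P_B(s)_j=\min(\max(s_j,a_j),b_j)$, with $z^*$ as in the statement, and use that $a_j\le0\le b_j$ because $0\in B$. I split into the three cases $a_j\le s_j\le b_j$, $s_j>b_j$, and $s_j<a_j$, with the first case further subdivided according to the sign of $s_j$. In each case step (i) is a one-line verification that $z^*_j\in\partial\varphi(z_j)$; the degenerate endpoints $a_j=0$ or $b_j=0$ force $z_j=0$, and the claimed value $z^*_j=\pm\lambda$ (or the simplified value $0$) lies in $[-\lambda,\lambda]$. For step (ii): in the middle case $z^*_j=x^*_j$ and the product vanishes; if $s_j>b_j$ then $s_j>0$, hence $x^*_j=s_j+\lambda>b_j+\lambda=z^*_j$, giving $z^*_j-x^*_j<0$ while $y_j-z_j=y_j-b_j\le0$; symmetrically if $s_j<a_j$ then $s_j<0$, $x^*_j=s_j-\lambda<a_j-\lambda=z^*_j$, and $y_j-z_j=y_j-a_j\ge0$. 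Either way the product is nonnegative, completing the verification.

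The computations are entirely elementary; the only point requiring genuine care is the case bookkeeping, in particular the boundary situations $a_j=0$ or $b_j=0$ where $z_j=0$ and one must appeal to the full interval $\partial\varphi(0)=[-\lambda,\lambda]$ rather than a singleton, and checking that the optional simplification $z^*_j:=0$ remains admissible there.
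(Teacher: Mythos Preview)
Your proof is correct and follows exactly the approach indicated in the paper, namely verifying the variational inequality~\eqref{eq:Pi-variational} of Lemma~\ref{lem:Pi} (the paper simply states that ``the proof is straightforward by checking the variational inequality~\eqref{eq:Pi-variational}'' without spelling out the cases). Your coordinate-wise case analysis is accurate, including the boundary cases $a_j=0$ or $b_j=0$ and the optional simplification of $z^*_j$.
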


We note that for the function $f(x)=\lambda \|x\|_1^2 + \frac{1}{2} \|x\|_2^2$ an analoguous result to (a) holds with the shrinkage operation $S_\lambda$ replaced by the relative shrinkage operation $S_{c(\lambda)}$, see~\cite{Sch12}.

\subsection{The method of Bregman projections for split feasibility problems}
\label{sec:breg-proj-sfp}

Consider a \emph{convex feasibility problem} (CFP),
\be
\mbox{find} \quad x \in C=\bigcap_{i=1,\ldots,m} C_i \tag{CFP}\label{eq:CFP}
\ee
with closed convex sets $C_i \subset \RR^n$ such that the intersection $C$ is not empty.
A simple and widely known idea to solve~\eqref{eq:CFP} is to project successively onto the individual sets $C_i$ and we refer to~\cite{BB96} for an excellent introduction.
By now there is a vast literature on CFPs and projection algorithms for their solution, see e.g.~\cite{BB97, BBC03, Bre67, Byr04, CEKB05, SSL08b, ZY05}.
These projection algorithms are most efficient if the projections onto the individual sets are relatively cheap.
A special instance of the CFP is the \emph{split feasibility problem} (SFP)~\cite{CE94,BC01,Byr02}, where some of the sets $C_i$ arise by imposing convex constraints $Q_i \subset \RR^{m_i}$ in the range of a matrix $A_i \in \RR^{m_i \times n}$,
\be
C_i=C_{Q_i} =\{ x \in \RR^n \,|\, A_i x \in Q_i \} \,. \label{eq:Q_i}
\ee
In general projections onto such sets can be prohibitively expensive and we call the constraints $A_ix\in Q_i$ \emph{difficult} (in contrast to \emph{simple} constraints $x\in C_i$).
Hence, it is often preferable to use projections onto suitable enclosing halfspaces.
The following lemma shows a construction of such an enclosing halfspace.

\begin{lem} \label{lem:enclosing-halfspace}
Assume that $\tilde{x} \notin C_Q=\set{ x \in \RR^n}{ A x \in Q }$ and set
\[
w:=A\tilde{x}-P_Q(A\tilde{x}) \quad \mbox{and} \quad \beta:=\scp{A^T w}{ \tilde{x}} - \norm[2]{w}^2 \,.
\]
Then it holds that $A^T w \not=0$, $\tilde{x} \notin H_{\le}(A^T w, \beta)$ and $C_Q \subset H_{\le}(A^T w, \beta)$, in other words, the hyperplane $H(A^T w,\beta)$ separates $\tilde{x}$ from $C_Q$.
\end{lem}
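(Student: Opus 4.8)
The plan is to reduce everything to the variational inequality characterizing the orthogonal projection $P_Q$ onto the nonempty closed convex set $Q$, namely
\[
\scp{q_0 - P_Q(q_0)}{q - P_Q(q_0)} \le 0 \quad\text{for all}\quad q \in Q,
\]
applied with $q_0 = A\tilde{x}$. First I would note that $\tilde{x} \notin C_Q$ means $A\tilde{x} \notin Q$, hence $P_Q(A\tilde{x}) \ne A\tilde{x}$, so that $w = A\tilde{x} - P_Q(A\tilde{x}) \ne 0$; this also tells us $\norm[2]{w}^2 > 0$, which will drive the strict separation.

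Next I would rewrite $\beta$ in the more convenient form
\[
\beta = \scp{A^T w}{\tilde{x}} - \norm[2]{w}^2 = \scp{w}{A\tilde{x}} - \norm[2]{w}^2 = \scp{w}{A\tilde{x} - w} = \scp{w}{P_Q(A\tilde{x})}.
\]
From the first identity, $\scp{A^T w}{\tilde{x}} - \beta = \norm[2]{w}^2 > 0$, which immediately gives $\tilde{x} \notin H_{\le}(A^T w, \beta)$. For the inclusion $C_Q \subset H_{\le}(A^T w, \beta)$, I would take any $x$ with $Ax \in Q$ and substitute $q = Ax$ into the projection inequality, obtaining $\scp{w}{Ax - P_Q(A\tilde{x})} \le 0$, i.e. $\scp{A^T w}{x} \le \scp{w}{P_Q(A\tilde{x})} = \beta$.

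It remains to rule out $A^T w = 0$, which is the one point that needs a little care, since $w \ne 0$ alone does not force $A^T w \ne 0$ (a nonzero $w$ could lie in the kernel of $A^T$). Here I would either invoke that $A$ has full row rank, so that $A^T$ is injective and $w \ne 0$ yields $A^T w \ne 0$, or else argue from feasibility of the underlying problem: if $A^T w = 0$, then the computation above gives $\beta = \scp{w}{P_Q(A\tilde{x})} = \scp{A^T w}{\tilde{x}} - \norm[2]{w}^2 = -\norm[2]{w}^2 < 0$, whereas for any $x_0 \in C_Q$ (which is nonempty) the inclusion just proved forces $0 = \scp{A^T w}{x_0} \le \beta < 0$, a contradiction. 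This last step is the main (and fairly mild) obstacle; everything else is a direct unwinding of the projection inequality together with the definition of $\beta$, and once $A^T w \ne 0$ is secured the notation $H_{\le}(A^T w, \beta)$ is legitimate and the three claimed properties hold as shown.
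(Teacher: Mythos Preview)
Your proof is correct and follows essentially the same route as the paper: both arguments use the projection variational inequality for $P_Q$ to obtain $C_Q \subset H_{\le}(A^T w,\beta)$, read off $\scp{A^T w}{\tilde x}-\beta=\norm[2]{w}^2>0$ for the strict separation, and then derive $A^T w\neq 0$ by contradiction. The paper uses precisely your second alternative for that last step (feasibility of $C_Q$, not full row rank of $A$), so your identification of the hypothesis actually needed is on point; your preliminary rewriting $\beta=\scp{w}{P_Q(A\tilde x)}$ is a mild streamlining of the paper's decomposition but not a different idea.
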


\begin{proof}
The assumption $\tilde{x} \notin C_Q$ is equivalent to $w \not= 0$.
Hence we have $\scp{A^T w}{\tilde{x}} > \beta$.
Moreover for all $x \in C_Q$ we have $\scp{w}{Ax-P_Q(A\tilde{x})} \le 0$ and thus can estimate 
\bean
\scp{ A^T w}{x} &=&  \scp{ w}{Ax-P_Q(A\tilde{x})} + \scp{w}{P_Q(A\tilde{x}) - A\tilde{x}} + \scp{A^T w}{\tilde{x}} \\
& \le & \scp{ A^T w}{\tilde{x}}- \norm[2]{w}^2 = \beta \,.
\eean
If $A^T w =0$ this would imply $\|w\|_2=0$ which contradicts $\tilde{x} \notin C_Q$.
\end{proof}

To solve a split feasibility problem one can proceed as follows:
Encounter the different constraints $C_i$ and $C_{Q_i}$ successively.
For a simple constraint $C_i$  project the current iterate onto $C_i$, while for a difficult constraint $C_{Q_i}$ project the current iterate onto a separating hyperplane according to lemma~\ref{lem:enclosing-halfspace}.
To formalize this idea we introduce some notation.
Set $I:=\{1,\ldots,m\}$, let $I_Q \subset I$ be the subset of all indices $i$ belonging to difficult constraints $C_{Q_i}$, and denote by $I_C:=I \setminus I_Q$ the set of the remaining indices.
The split feasibility is then formulated as
\be
\mbox{find} \quad x\quad\mbox{such that}\quad x\in C_i\ \mbox{for}\ i\in I_C\quad\mbox{and}\quad A_i x\in Q_i\ \mbox{for}\ i\in I_Q \tag{SFP}\label{eq:SFP}.
\ee
Further let $r:\NN\to I$ be a \emph{control sequence}, i.e. $r(k)$ indicates which constraint shall be treated in the $k$-th iteration.
We follow~\cite{BB97} and assume that $r$ is a \textit{random mapping}, i.e. each value in the index set $I$ is taken infinitely often.
Note that random mappings in this sense are not necessarily stochastic objects; a cyclic control sequence $r(k) = (k\mod m)+1$ is also random in this sense.
Algorithm~\ref{alg:BPSFP} (BPSFP) then generates a sequence of iterates $x_k$ similarly to the method of \emph{random Bregman projections} from~\cite{BB97}.

\begin{algorithm}[h]
  \caption{Bregman projections for split feasibility problems (BPSFP)}
  \label{alg:BPSFP}
  \begin{algorithmic}[1]
    \REQUIRE{starting points $x_0\in\RR^n$ and $x_0^* \in \partial
    f(x_0)$, a control sequence $r:\NN\to I$}
    \ENSURE{a feasible point for~\eqref{eq:SFP}}
    \STATE initialize $k =  0$
    \REPEAT
    \IF[\texttt{simple constraint $x\in C_{r(k)}$}]{$r(k)\in I_C$}
    \STATE update primal variable $x_k = \Pi_{C_{r(k)}}^{x^*_{k-1}}(x_{k-1})$
    \STATE choose dual variable $x_k^* \in \partial f(x_k)$ admissible for $x_k$
    (cf. Lemma~\ref{lem:Pi})
    \ELSIF[\texttt{difficult constraint $A_{r(k)}x\in Q_{r(k)}$}]{$r(k)\in I_Q$} 
    \STATE calculate $w_k = A_{r(k)} x_{k-1} - P_{Q_{r(k)}}\big(A_{r(k)} x_{k-1}\big)$\label{algline:wk}
    \STATE calculate $\beta_k = \scp{A_{r(k)}^T w_k}{x_{k-1}} - \norm[2]{w_k}^2$\label{algline:betak}
    \STATE update primal variable $x_k = \Pi_{H_{\le}(A_{r(k)}^T w_k, \beta_k)}^{x^*_{k-1}}(x_{k-1})$ (cf. Definition~\ref{def:aff-space-half-space})\label{algline:breg-proj-halfspace}
    \STATE choose dual variable $x_k^* \in \partial f(x_k)$ admissible for $x_k$
    (cf. Lemma~\ref{lem:Pi})
    \ENDIF
    \STATE increment $k =  k+1$
    \UNTIL{a stopping criterion is satisfied}
  \end{algorithmic}
\end{algorithm}

\begin{thm} \label{thm:convergence}
The sequence $(x_k)_k$ produced by algorithm~\ref{alg:BPSFP} converges to a solution $\hat{x}$ of~\eqref{eq:SFP} and the sequence $(x^*_k)_k$ converges to a subgradient $x^* \in \partial f(\hat{x})$.
For all solutions $x$ of~\eqref{eq:SFP} we have
\be
D^{x_k^*}(x_k,x) \le D^{x_{k-1}^*}(x_{k-1},x) - \frac{\alpha}{2} \|x_{k-1}-x_k\|_2^2  \label{eq:decreasing}
\ee
and furthermore, for all $r(k) \in I_Q$,
\be
D^{x_k^*}(x_k,x) \le D^{x_{k-1}^*}(x_{k-1},x) - \frac{\alpha}{2} \frac{\|w_k\|_2^4}{\norm[2]{A_{r(k)}w_k}^2}  \,. \label{eq:decreasing_Q}
\ee
\end{thm}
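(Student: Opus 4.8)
The plan is to exploit that $(x_k)$ is Bregman monotone with respect to the solution set $S:=\set{x\in\RR^n}{x\in C_i,\ i\in I_C,\ \text{and}\ A_ix\in Q_i,\ i\in I_Q}$, a nonempty closed convex set, and to read off convergence from this monotonicity together with the two quantitative decrease estimates.

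First I would prove \eqref{eq:decreasing} and \eqref{eq:decreasing_Q}. Fix a solution $x\in S$. If $r(k)\in I_C$ then $x\in C_{r(k)}$, so applying the variational inequality \eqref{eq:Pi-D} of Lemma~\ref{lem:Pi} with $y=x$ to $x_k=\Pi_{C_{r(k)}}^{x^*_{k-1}}(x_{k-1})$ gives $D^{x_k^*}(x_k,x)\le D^{x_{k-1}^*}(x_{k-1},x)-D^{x_{k-1}^*}(x_{k-1},x_k)$, and Lemma~\ref{lem:D} bounds the last term below by $\tfrac{\alpha}{2}\norm[2]{x_{k-1}-x_k}^2$, which is \eqref{eq:decreasing}. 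If $r(k)\in I_Q$, then by Lemma~\ref{lem:enclosing-halfspace} we have $C_{Q_{r(k)}}\subset H_{\le}(A_{r(k)}^Tw_k,\beta_k)$, so $x$ lies in this half-space and, since $x_k$ is the Bregman projection of $x_{k-1}$ onto it (line~\ref{algline:breg-proj-halfspace}), the same argument again yields \eqref{eq:decreasing}; in addition, \eqref{eq:D-decreasing-H} of Lemma~\ref{lem:Pi_affine_space_hyperplane}(b) applied with $a=A_{r(k)}^Tw_k$ and $\beta=\beta_k$, together with the identity $\scp{a}{x_{k-1}}-\beta=\norm[2]{w_k}^2$ coming from line~\ref{algline:betak}, gives the sharper bound \eqref{eq:decreasing_Q}.

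The convergence statement then follows the standard route for Bregman-monotone sequences. Summing \eqref{eq:decreasing} and using $D\ge0$, for every $x\in S$ the nonincreasing sequence $k\mapsto D^{x_k^*}(x_k,x)$ converges, $\sum_k\norm[2]{x_{k-1}-x_k}^2<\infty$ so that $\norm[2]{x_{k-1}-x_k}\to0$, and from \eqref{eq:decreasing_Q} and $\norm[2]{A_{r(k)}^Tw_k}\le\norm[2]{A_{r(k)}}\,\norm[2]{w_k}$ also $\norm[2]{w_k}=\mathrm{dist}(A_{r(k)}x_{k-1},Q_{r(k)})\to0$ along the difficult steps. Since $\tfrac{\alpha}{2}\norm[2]{x_k-x}^2\le D^{x_k^*}(x_k,x)\le D^{x_0^*}(x_0,x)$ by Lemma~\ref{lem:D}, the sequence $(x_k)$ is bounded; and since $D^{x_k^*}(x_k,x)=\Delta(x_k^*,x)$ by \eqref{eq:D-and-Delta} is bounded while $\Delta(\cdot,x)$ is coercive, $(x_k^*)$ is bounded too. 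I would then pass to a subsequence along which $x_{k_j}\to\hat x$ and $x_{k_j}^*\to\hat x^*$; because $x_k=\nabla f^*(x_k^*)$ by \eqref{eq:Delta-subgradients} and $\nabla f^*$ is continuous by \eqref{eq:Lipschitz}, this gives $\hat x=\nabla f^*(\hat x^*)$, i.e.\ $\hat x^*\in\partial f(\hat x)$.

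The hard part is to show $\hat x\in S$; this is where the hypothesis that $r$ is a random mapping is used. Each index $i\in I$ occurs at infinitely many steps, and at such a step the $i$-th constraint is enforced: $x_k\in C_i$ for $i\in I_C$, respectively $\mathrm{dist}(A_ix_{k-1},Q_i)=\norm[2]{w_k}\to0$ for $i\in I_Q$. Using the vanishing increments $\norm[2]{x_{k-1}-x_k}\to0$ and closedness of the sets $C_i$ and $Q_i$, one transfers these properties to the cluster point. For a cyclic control this is immediate, since a block of $m$ consecutive iterates contains every index and, by the vanishing increments, all of them tend to $\hat x$; for a general random mapping I expect this step to be the main obstacle, to be handled along the lines of the random-Bregman-projection analysis of~\cite{BB97}: pass to a further subsequence on which $r(k_j)$ is a fixed index so that $\hat x$ satisfies at least that constraint, and then use the fact, established above, that $D^{x_k^*}(x_k,x)=\Delta(x_k^*,x)\to\Delta(\hat x^*,x)$ for every $x\in S$ to pin down the admissible cluster point and exclude drift at the remaining constraints. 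Once $\hat x\in S$ is known, $\Delta(\hat x^*,\hat x)=0$ by \eqref{eq:Delta-subgradients}, so the monotone sequence $D^{x_k^*}(x_k,\hat x)=\Delta(x_k^*,\hat x)$, which along the subsequence equals $\Delta(\hat x^*,\hat x)=0$, converges to $0$; hence $\tfrac{\alpha}{2}\norm[2]{x_k-\hat x}^2\le D^{x_k^*}(x_k,\hat x)\to0$ and the whole sequence $(x_k)$ converges to $\hat x$. Finally, $\Delta(x_k^*,\hat x)\to0$ together with coercivity of $\Delta(\cdot,\hat x)$ and $\Delta(x^*,\hat x)=0\Leftrightarrow x^*\in\partial f(\hat x)$ forces every cluster point of $(x_k^*)$ into $\partial f(\hat x)$; combining this with the monotonicity of $\Delta(x_k^*,x)$ for all $x\in S$ — so that all dual cluster points induce the same limiting values on $S$ — yields convergence of $(x_k^*)$ to a single $x^*\in\partial f(\hat x)$.
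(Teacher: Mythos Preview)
Your outline matches the paper's proof closely: both derive \eqref{eq:decreasing} and \eqref{eq:decreasing_Q} from Lemma~\ref{lem:Pi} and Lemma~\ref{lem:Pi_affine_space_hyperplane}(b), extract boundedness of $(x_k)$ and $(x_k^*)$ via Lemma~\ref{lem:D} and coercivity of $\Delta(\cdot,x)$, pass to a subsequential dual limit $\hat x^*$ and set $\hat x=\nabla f^*(\hat x^*)$, and finish by pushing $D^{x_k^*}(x_k,\hat x)\to 0$ once $\hat x\in S$ is known.

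The gap is precisely where you flag it: showing $\hat x\in S$ for a general random control. You defer to~\cite{BB97}, but the paper carries out a concrete argument whose key ingredient you do not make explicit: inequality~\eqref{eq:decreasing} actually holds for every $x\in C_{r(k)}$, not only for $x\in C$. The paper exploits this as follows. After fixing a subsequence with $r(k_l)\equiv j$ (so $\hat x\in C_j$) and refining so that each window $\{k_l,\dots,k_{l+1}-1\}$ hits every index, one splits $I$ into $I_{\mathrm{in}}=\{i:\hat x\in C_i\}$ and $I_{\mathrm{out}}$ and argues by contradiction. Let $m_l$ be the last time before $k_{l+1}$ that the control has stayed in $I_{\mathrm{in}}$. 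Because $\hat x\in C_{r(k)}$ for all those steps, one may iterate \eqref{eq:decreasing} with $x=\hat x$ to get $D^{x^*_{m_l}}(x_{m_l},\hat x)\le D^{x^*_{k_l}}(x_{k_l},\hat x)=\Delta(x^*_{k_l},\hat x)\to 0$, hence $x_{m_l}\to\hat x$. Then the vanishing increments force $x_{m_l+1}\to\hat x$ as well, while $r(m_l+1)\in I_{\mathrm{out}}$ puts $\hat x$ into some $C_{j_{\mathrm{out}}}$, a contradiction. Your cyclic sketch is fine, but the general case needs exactly this ``stay in $I_{\mathrm{in}}$ and telescope with $\hat x$'' idea; without it the argument does not go through, because you cannot apply \eqref{eq:decreasing} at $\hat x$ until you already know $\hat x\in C_{r(k)}$.

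A smaller point: your proposed argument for convergence of $(x_k^*)$ is not complete. From $\Delta(x_k^*,\hat x)\to 0$ you only get that every dual cluster point lies in $\partial f(\hat x)$, and equality of the limits $\Delta(\cdot,x)$ on $S$ yields $\langle x_1^*-x_2^*,\,x-\hat x\rangle=0$ for all $x\in S$, which does not force $x_1^*=x_2^*$ in general. The paper's proof, incidentally, does not spell out this step either; so your instinct to address it is good, but the argument as written does not close the gap.
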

\begin{proof}
The proof is similar to the ones in~\cite{BB97, SSL08b}.
The inequalities~\eqref{eq:decreasing} and~\eqref{eq:decreasing_Q} follow directly from~\eqref{eq:Pi-D} and~\eqref{eq:D-decreasing-H} respectively.
More precisely these inequalities hold for all $x \in C_{r(k)}$.
An immediate consequence is that for all $x \in C$ the sequence $\big(D^{x_k^*}(x_k,x)\big)_k$ is decreasing and hence convergent and bounded, and that we have
\be
\lim_{k \to \infty}  \|x_{k-1}-x_k\|_2 = 0 \,, \label{eq:successive-iterates}
\ee
as well as for all $r(k) \in I_Q$
\be
\lim_{k \to \infty}  \|A_{r(k)} x_{k-1} - P_{Q_{r(k)}}\big(A_{r(k)} x_{k-1}\big)\|_2 = \|w_k\|_2 = 0 \,. \label{eq:Q-iterates}
\ee
Coercivity of $f^*$ and boundedness of $\Delta(x_k^*,x)=D^{x_k^*}(x_k,x)$ imply that the sequences $(x^*_k)_k$ is bounded.
Hence $(x^*_k)_k$ has a convergent subsequence $(x^*_{k_l})_l$ with limit $x^* \in \RR^n$.
Since $\nabla f^*$ is continuous we have
\[
x_{k_l}=\nabla f^*(x^*_{k_l}) \to \nabla f^*(x^*)=:\hat{x} \quad \mbox{for} \quad  l \to \infty \,.
\]
We show that $\hat{x}\in C$.
By choosing a subsequence we may, without loss of generality, assume that $r(k_l)=j$ for some fixed $j \in I$ and all $l \in \NN$ and hence $\hat{x} \in C_j$.
In case $j \in I_C$ this is clear since then $x_{k_l} \in C_j$ for all $l \in \NN$.
And in case $j \in I_Q$ this follows from~\eqref{eq:successive-iterates},~\eqref{eq:Q-iterates} and continuity of the orthogonal projection.
Since $r$ is a random mapping we may also assume that $\{r(k_l), r(k_l+1),\ldots,r(k_{l+1}-1)\}=I$.
We define
\[
I_{\mbox{\small{in}}}=\{ i \in I \,|\, \hat{x} \in C_i \} \quad \mbox{and} \quad I_{\mbox{\small{out}}}=I \setminus I_{\mbox{\small{in}}} \,.
\]
and want to show that $I_{\mbox{\small{out}}} = \emptyset$.
Note that $I_{\mbox{\small{in}}} \not= \emptyset$ because $j \in I_{\mbox{\small{in}}}$.
Now assume to the contrary that $I_{\mbox{\small{out}}} \not= \emptyset$.
Then to each $l \in \NN$ there is a maximal Index $m_l \in \{k_l, k_l+1,\ldots,k_{l+1}-2\}$ such that $r(k) \in I_{\mbox{\small{in}}}$ for all $k_l \le k \le m_l$ and $r(m_l+1) \in I_{\mbox{\small{out}}}$.
We remind that~\eqref{eq:decreasing} holds for all $x \in C_{r(k)}$ and use the inequality successively to get
\[
D^{x^*_{m_l}}(x_{m_l},\hat{x}) \le D^{x^*_{k_l}}(x_{k_l},\hat{x}) = \Delta(x^*_{k_l},\hat{x}) \quad \mbox{for all} \quad l \in \NN \,.
\]
Since $\Delta$ is continuous the right hand side converges to $\Delta(x^*,\hat{x})=0$ for $l \to  \infty$, see~\eqref{eq:Delta-subgradients}.
Hence also the left hand side converges to zero.
By lemma~\ref{lem:D} this implies $\lim_{l \to \infty} x_{m_l}=\hat{x}$.
By passing to a subsequence we may assume $r(m_l+1)=j_{\mbox{\small{out}}} \in I_{\mbox{\small{out}}}$ for all $l \in \NN$ and that the sequence $(x_{m_l+1})_l$ converges to some $\tilde{x} \in C_{j_{\mbox{\small{out}}}}$.
From~\eqref{eq:successive-iterates} we get
\[
\|\hat{x}-\tilde{x}\|_2=\lim_{l \to \infty}  \|x_{m_l}-x_{m_l+1}\|_2 = 0 \,,
\]
and conclude that $\hat{x} = \tilde{x} \in C_{j_{\mbox{\small{out}}}}$, i.e. $j_{\mbox{\small{out}}} \in I_{\mbox{\small{in}}}$ which is a contradiction.
Hence we indeed have $\hat{x}\in C$.
Since the sequence $\big(D^{x_k^*}(x_k,\hat{x})\big)_k$ converges and the subsequence $\big(D^{x^*_{k_l}}(x_{k_l},\hat{x})\big)_l$ converges to zero, the whole sequence $\big(D^{x_k^*}(x_k,\hat{x})\big)_k$ must converge to zero as well.
By lemma~\ref{lem:D} we conclude that $\lim_{k \to \infty} x_k=\hat{x}$.
\end{proof}

According to lemma~\ref{lem:Pi_affine_space_hyperplane}~(b) the computation of the Bregman projection $\Pi_{H_k}^{x^*_{k-1}}(x_{k-1})$ onto the halfspace $H_k$ in step~\ref{algline:breg-proj-halfspace} of algorithm~\ref{alg:BPSFP} amounts to an exact solution of the minimization problem~\eqref{eq:exact-linesearch-BP-halfspace}.
In practice, this is feasible only in special cases, e.g. for $f(x)=\|x\|_2^2$ or $f(x)=\lambda \|x\|_1 + \frac{1}{2} \|x\|_2^2$, as we will see below, but in general one must resort to inexact steps.
Fortunately the assertions of theorem~\ref{thm:convergence} remain true for several of such inexact linesearches as well.

\begin{thm} \label{thm:inexact}
Let $c_1,c_2 >0$ and for a given $x_{k-1}^*$ and $w_k$ and $\beta_k$ according to lines~\ref{algline:wk} and~\ref{algline:betak} of algorithm~\ref{alg:BPSFP}, respectively, set
\[
g(t):=f^*(x^*_{k-1} - t \cdot A_{r(k)}^T w_k) + t\cdot \beta_k.
\]
\begin{itemize}
\item[(a)] If $t_k$ is chosen such that
  \begin{enumerate}
  \item[i)] $t_k\geq c_1$
  \item[ii)] $g'(t_k) \le  0 $
  \item[iii)] $g(t_k) \le  g(0) + c_2 \cdot t_k \cdot g'(0)$
  \end{enumerate}
  and $x_k$ and $x_k^*$ are updated according to 
  \[
  x^*_k := x^*_{k-1} - t_k \cdot A_{r(k)}^T w_k \quad \mbox{and} \quad x_k:=\nabla f^*(x^*_k),
  \]
  then the assertions of theorem~\ref{thm:convergence} remain true.
\item[(b)] The stepsizes
  \[
  \tilde{t}_k := \alpha \cdot \frac{\norm[2]{w_k}^2}{\norm[2]{A_{r(k)}^T w_k}^2} \quad\mbox{and}\quad \bar t_k :=  \frac{\alpha}{\norm[2]{A_{r(k)}^T}^2}\,. \label{eq:tilde_t_k}
  \]
  both fulfill conditions i)--iii) with $c_1 =
  \alpha/\norm[2]{A_{r(k)}^T}^2$ and $c_2 = 1/2$.
\end{itemize}
\end{thm}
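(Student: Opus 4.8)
The plan is to reduce part~(a) to Theorem~\ref{thm:convergence} by re-establishing the two descent estimates \eqref{eq:decreasing} and \eqref{eq:decreasing_Q} for the inexact step, and then to verify conditions i)--iii) for the two explicit stepsizes in part~(b) using the quadratic majorant of $g$ provided by \eqref{eq:remainder_inequality}.

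For part~(a), fix an iteration $k$ with $i:=r(k)\in I_Q$, write $a:=A_i^Tw_k$, so that $\beta_k=\scp{a}{x_{k-1}}-\norm[2]{w_k}^2$, and let $x$ be any point satisfying the difficult constraint $A_ix\in Q_i$. I would first record two facts. Since $x_{k-1}^*\in\partial f(x_{k-1})$ we have $x_{k-1}=\nabla f^*(x_{k-1}^*)$, hence $g'(0)=\beta_k-\scp{a}{x_{k-1}}=-\norm[2]{w_k}^2$; and since $x_k:=\nabla f^*(x_k^*)$, relation \eqref{eq:Delta-subgradients} gives $x_k^*\in\partial f(x_k)$, so $x_k^*$ is a valid dual iterate and \eqref{eq:D-and-Delta} applies at step $k$. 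By Lemma~\ref{lem:enclosing-halfspace}, $\{x:A_ix\in Q_i\}\subset H_{\le}(a,\beta_k)$, so $\scp{a}{x}\le\beta_k$. Expanding $D^{x_k^*}(x_k,x)=\Delta(x_k^*,x)$ via \eqref{eq:Delta}, \eqref{eq:D-and-Delta} and the update $x_k^*=x_{k-1}^*-t_ka$ yields the identity
\[
D^{x_k^*}(x_k,x)=D^{x_{k-1}^*}(x_{k-1},x)+\big(g(t_k)-g(0)\big)-t_k\big(\beta_k-\scp{a}{x}\big).
\]
Dropping the last (nonpositive) term and invoking iii) together with $t_k\ge c_1$ gives $D^{x_k^*}(x_k,x)\le D^{x_{k-1}^*}(x_{k-1},x)-c_1c_2\norm[2]{w_k}^2$, the analogue of \eqref{eq:decreasing_Q}. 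To recover \eqref{eq:decreasing} I would instead use the (purely algebraic) three-point identity
\[
D^{x_{k-1}^*}(x_{k-1},x)=D^{x_{k-1}^*}(x_{k-1},x_k)+D^{x_k^*}(x_k,x)+\scp{x_k^*-x_{k-1}^*}{x-x_k},
\]
observe that $\scp{x_k^*-x_{k-1}^*}{x-x_k}=t_k\big(\scp{a}{x_k}-\scp{a}{x}\big)$, and note that condition ii) gives $\scp{a}{x_k}=\beta_k-g'(t_k)\ge\beta_k\ge\scp{a}{x}$, so this cross term is nonnegative; together with $D^{x_{k-1}^*}(x_{k-1},x_k)\ge\tfrac\alpha2\norm[2]{x_{k-1}-x_k}^2$ from Lemma~\ref{lem:D} this is exactly \eqref{eq:decreasing}. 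For simple constraints $r(k)\in I_C$ nothing is changed and \eqref{eq:decreasing} holds by \eqref{eq:Pi-D} and Lemma~\ref{lem:D} as in the original proof. With both estimates available, the rest of the proof of Theorem~\ref{thm:convergence}---boundedness and convergence of $\big(D^{x_k^*}(x_k,x)\big)_k$, $\norm[2]{x_{k-1}-x_k}\to0$, $\norm[2]{w_k}\to0$ for $r(k)\in I_Q$, boundedness of $(x_k^*)_k$ by coercivity of $f^*$, and the random-mapping subsequence argument identifying the limit as feasible---goes through verbatim.

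For part~(b), write $a:=A_{r(k)}^Tw_k$ and assume $w_k\neq0$ (otherwise the step is trivial, and $\bar t_k$ still works). Applying \eqref{eq:remainder_inequality} to $f^*$ at $x_{k-1}^*$ in direction $-ta$ gives the quadratic majorant
\[
g(t)\le q(t):=g(0)+t\,g'(0)+\tfrac{1}{2\alpha}t^2\norm[2]{a}^2=g(0)-t\norm[2]{w_k}^2+\tfrac{1}{2\alpha}t^2\norm[2]{a}^2,
\]
whose unique minimizer is precisely $\tilde t_k=\alpha\norm[2]{w_k}^2/\norm[2]{a}^2$. Since $\norm[2]{a}\le\norm[2]{A_{r(k)}^T}\norm[2]{w_k}$ we get $\tilde t_k\ge\alpha/\norm[2]{A_{r(k)}^T}^2=\bar t_k=c_1$ and $\bar t_k\le\tilde t_k$, so i) holds for both stepsizes. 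For ii), the Lipschitz bound \eqref{eq:Lipschitz} and Cauchy--Schwarz give $g'(t)\le g'(0)+\tfrac1\alpha t\norm[2]{a}^2$, which is $\le0$ for $t\le\tilde t_k$, so $g'(\tilde t_k)\le0$ and $g'(\bar t_k)\le0$. For iii), a one-line computation shows $q(t)\le g(0)+\tfrac12t\,g'(0)$ precisely for $t\le\tilde t_k$ (with equality at $t=\tilde t_k$); hence $g(t_k)\le q(t_k)\le g(0)+c_2t_k\,g'(0)$ with $c_2=\tfrac12$ for $t_k\in\{\tilde t_k,\bar t_k\}$.

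The step requiring the most care is the derivation of \eqref{eq:decreasing} in part~(a): condition iii) only controls the decrease of $g$, equivalently of $D^{x_k^*}(x_k,x)$, and thus yields the $\norm[2]{w_k}$-term of \eqref{eq:decreasing_Q} but not the $\tfrac\alpha2\norm[2]{x_{k-1}-x_k}^2$ term of \eqref{eq:decreasing}; producing the latter is precisely the role of the ``no overshoot'' condition ii), and the sign bookkeeping in the three-point identity---combining $\scp{a}{x_k}\ge\beta_k$ from ii) with $\scp{a}{x}\le\beta_k$ from the separating halfspace of Lemma~\ref{lem:enclosing-halfspace}---is the one place where all the hypotheses genuinely interact. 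A minor point: in part~(b) the constant $c_1=\alpha/\norm[2]{A_{r(k)}^T}^2$ depends on $r(k)$, but since there are finitely many constraints one may feed part~(a) the uniform choice $c_1:=\min_i\alpha/\norm[2]{A_i^T}^2>0$.
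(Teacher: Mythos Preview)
Your proposal is correct and follows essentially the same approach as the paper. In part~(a), your three-point identity plus the sign analysis of the cross term $\scp{x_k^*-x_{k-1}^*}{x-x_k}$ is exactly the paper's verification of the variational inequality \eqref{eq:Pi-variational} (whose equivalence with \eqref{eq:Pi-D} via Lemma~\ref{lem:Pi} is precisely that three-point identity), and your use of iii) to obtain the $c_1c_2\norm[2]{w_k}^2$ decrease matches the paper's derivation of the analogue of \eqref{eq:decreasing_Q}; in part~(b) both you and the paper rely on the quadratic majorant coming from \eqref{eq:remainder_inequality} and \eqref{eq:Lipschitz}, with only a cosmetic difference (you handle $\bar t_k$ via $\bar t_k\le\tilde t_k$ and the single majorant, the paper bounds $\norm[2]{a}^2\le\norm[2]{A_{r(k)}^T}^2\norm[2]{w_k}^2$ separately). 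Your closing remark about replacing the $r(k)$-dependent $c_1$ by $\min_i \alpha/\norm[2]{A_i^T}^2$ is a valid clarification that the paper leaves implicit.
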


\begin{proof}
(a) Condition ii) ensures that $x_k$ and $x^*_k$ fulfill the variational inequality~\eqref{eq:Pi-variational} for all $x \in C_{r(k)} \subset H_k$, since for all such $x$ we have
\bean
\scp{x^*_k-x^*_{k-1}}{x-x_k} &=& t_k \cdot \big(\scp{w_k}{A_{r(k)}x_k} - \scp{w_k}{A_{r(k)}x}\big)  \\
&\ge &  t_k \cdot \big(\scp{w_k}{A_{r(k)}x_k} - \beta_k\big) \\
&=& - t_k \cdot g'(t_k) \ge 0 \,.
\eean
Hence the equivalent inequality~\eqref{eq:decreasing} holds as well.
Condition iii) ensures that we have
\[
D^{x_k^*}(x_k,x) \le D^{x_{k-1}^*}(x_{k-1},x) - c_2 \cdot t_k \cdot \|w_k\|_2^2
\]
for all $x \in C_{r(k)} \subset H_k$.
Like~\eqref{eq:decreasing_Q} and due to i) this forces $\|w_k\|_2$ to converge to zero.
Together with~\eqref{eq:decreasing} this suffices to show convergence as in the proof of theorem~\ref{thm:convergence}.

(b) Clearly, both $\tilde t_k$ and $\bar t_k$ fulfill condition i).
To see that the condition ii) is fulfilled observe that due to the Lipschitz continuity of $\nabla f^*$~\eqref{eq:Lipschitz} we have for all $t \ge 0$
\bea
g'(t) &=& - \scp{A_{r(k)}^T w_k}{\nabla f^*(x^*_{k-1} - t \cdot A_{r(k)}^T w_k)} + \beta_k \nonumber\\
&=& \scp{A_{r(k)}^T w_k}{\nabla f^*(x^*_{k-1}) - \nabla f^*(x^*_{k-1} - t\cdot A_{r(k)}^T w_k)} - \|w_k\|_2^2 \nonumber\\
& \le & \frac{t}{\alpha} \cdot \|A_{r(k)}^T w_k\|_2^2 - \|w_k\|_2^2.  \label{eq:aux-inexact}
\eea
Obviously, it then holds that $g'(\tilde t_k)\leq 0$ and $g'(\bar t_k)\leq 0$.
For condition iii) we use~\eqref{eq:remainder_inequality} to get
\bean
g(t) &\leq & g(0) + t g'(0) + \tfrac{t^2}{2\alpha} \norm[2]{A_{r(k)}^T w_k}^2 \\
&\leq & g(0) + t g'(0) + \tfrac{t^2}{2\alpha} \norm[2]{A_{r(k)}^T}^2 \cdot \norm[2]{w_k}^2 \,.
\eean
Together with $g'(0)= -\norm[2]{w_k}^2$ we infer that iii) holds with $c_2 = 1/2$ both for $\tilde{t}_k$ and $\bar t_k$.
\end{proof}

Based on theorem~\ref{thm:inexact} we obtain three adaptions of algorithm~\ref{alg:BPSFP}:
\begin{enumerate}
\item Two variants of BPSFP with dynamic stepsize (algorithm~\ref{alg:BPSFPdynamic}) which use $\tilde t_k$ or $\bar t_k$ from theorem~\ref{thm:inexact}.
\item The BPSFP with inexact linesearch which increases the stepsize as long as the condition $g'(t_k)\leq 0$ is fulfilled (algorithm~\ref{alg:BPSFPinexact}).
\end{enumerate}

\begin{algorithm}[htb]
  \caption{Bregman projections for split feasibility problems (BPSFP) with dynamic or constant step-size}
  \label{alg:BPSFPdynamic}
  \begin{algorithmic}[1]
    \REQUIRE{starting points $x_0\in\RR^n$ and $x_0^* \in \partial
    f(x_0)$, modulus of strong convexity $\alpha>0$, a control sequence $r:\NN\to I$}
    \ENSURE{a feasible point for~\eqref{eq:SFP}}
    \STATE initialize $k =  0$
    \REPEAT
    \IF[\texttt{simple constraint $x\in C_{r(k)}$}]{$r(k)\in I_C$}
    \STATE update primal variable $x_k = \Pi_{C_{r(k)}}^{x^*_{k-1}}(x_{k-1})$
    \STATE choose dual variable $x_k^* \in \partial f(x_k)$ admissible for $x_k$
    (cf. Lemma~\ref{lem:Pi})
    \ELSIF[\texttt{difficult constraint $A_{r(k)}x\in Q_{r(k)}$}]{$r(k)\in I_Q$} 
    \STATE calculate $w_k = A_{r(k)} x_{k-1} - P_{Q_{r(k)}}\big(A_{r(k)} x_{k-1}\big)$
    \STATE calculate either $t_k = \alpha\tfrac{\norm[2]{w_k}^2}{\norm[2]{A_{r(k)}^Tw_k}^2}$ or $t_k = \tfrac{\alpha}{\norm[2]{A_{r(k)}^T}^2}$
    \STATE update dual variable $x_k^* = x_{k-1}^* - t_k A_{r(k)}^Tw_k$
    \STATE update primal variable $x_k = \nabla f^*(x^*_k)$
    \ENDIF
    \STATE increment $k =  k+1$
    \UNTIL{a stopping criterion is satisfied}
  \end{algorithmic}
\end{algorithm}

\begin{algorithm}[htb]
  \caption{Bregman projections for split feasibility problems (BPSFP) with inexact linesearch}
  \label{alg:BPSFPinexact}
  \begin{algorithmic}[1]
    \REQUIRE{starting points $x_0\in\RR^n$ and $x_0^* \in \partial
    f(x_0)$, modulus of strong convexity $\alpha>0$, a control sequence $r:\NN\to I$ and a constant $c>1$}
    \ENSURE{a feasible point for~\eqref{eq:SFP}}
    \STATE initialize $k =  0$
    \REPEAT
    \IF[\texttt{simple constraint $x\in C_{r(k)}$}]{$r(k)\in I_C$}
    \STATE update primal variable $x_k = \Pi_{C_{r(k)}}^{x^*_{k-1}}(x_{k-1})$
    \STATE choose dual variable $x_k^* \in \partial f(x_k)$ admissible for $x_k$
    (cf. Lemma~\ref{lem:Pi})
    \ELSIF[\texttt{difficult constraint $A_{r(k)}x\in Q_{r(k)}$}]{$r(k)\in I_Q$} 
    \STATE calculate $w_k = A_{r(k)} x_{k-1} - P_{Q_{r(k)}}\big(A_{r(k)} x_{k-1}\big)$
    \STATE calculate $\beta_k = \scp{A_{r(k)}^Tw_k}{x_{k-1}} - \norm[2]{w_k}^2$
    \STATE calculate $t_k = \alpha\tfrac{\norm[2]{w_k}^2}{\norm[2]{A_{r(k)}^Tw_k}^2}$
    \STATE choose $p\in \NN$ the largest integer such that\\
    $\beta_k\leq \scp{A_{r(k)}^Tw_k}{\nabla f^*(x_{k-1}^* - c^p\tilde t_k A_{r(k)}^T w_k}$
    \STATE set $t_k = c^p \tilde t_k$
    \STATE update dual variable $x_k^* = x_{k-1}^* - t_k A_{r(k)}^Tw_k$
    \STATE update primal variable $x_k = \nabla f^*(x^*_k)$
    \ENDIF
    \STATE increment $k =  k+1$
    \UNTIL{a stopping criterion is satisfied}
  \end{algorithmic}
\end{algorithm}

\subsection{Minimization problems with equality constraints}
\label{sec:meth-equal-constraints}

Algorithm~\ref{alg:BPSFP} and its variants with inexact stepsizes solve~\eqref{eq:SFP}.
But does this also allow to compute solutions to the optimization problem~\eqref{eq:f-sfp}?
It turns out that the answer is ``yes'' for linear constraints
\be
\min_{x \in \RR^n} f(x) \quad \mbox{s.t.} \quad Ax=b \label{eq:CP}
\ee
and several different algorithmic formulations.

\begin{cor}
  \label{cor:conv-lin-eq}
  Let $I_1,\dots, I_l$ be a partition of $\{1,\dots,m\}$, denote by $A_j$ the matrix consisting of the rows of $A$ indexed by $I_j$ and let $b_j$ denote the vector consisting of the entries of $b$ indexed by $I_j$.
  The constraints $A_jx=b_j$ may be considered both as simple constraints $C_j = L(A_j,b_j)$ (cf. lemma~\ref{lem:Pi_affine_space_hyperplane} (b)) or as difficult constraints $C_{Q_j}=\set{x \in \RR^n}{Ax_j \in Q_j=\{b_j\}}$.
  Then algorithm~\ref{alg:BPSFP} (or its variants) with Bregman projections with respect to $f$ and initialized with $x_0^* \in \Rcal(A^T)$ and $x_0=\nabla f^*(x_0^*)$ converges to a solution of~\eqref{eq:CP}.
\end{cor}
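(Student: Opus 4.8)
The plan is to reduce Corollary~\ref{cor:conv-lin-eq} to Theorem~\ref{thm:convergence} (and its inexact variants Theorem~\ref{thm:inexact}) by showing that, when the dual iterates are initialized in $\Rcal(A^T)$, the limit $\hat x$ furnished by those theorems is not merely feasible for $Ax=b$ but actually solves the optimization problem~\eqref{eq:CP}. The SFP here is $x\in\bigcap_j C_j$ with either $C_j=L(A_j,b_j)$ or $C_j=C_{Q_j}$, $Q_j=\{b_j\}$; in both cases the feasible set is exactly $\set{x}{Ax=b}$, which is nonempty since $A$ has full row rank, so Theorem~\ref{thm:convergence} applies and gives $x_k\to\hat x$ with $A\hat x=b$ and $x_k^*\to x^*\in\partial f(\hat x)$.

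First I would check that the dual variables stay in $\Rcal(A^T)$ throughout. In every iteration the dual update has the form $x_k^*=x_{k-1}^*-A_{r(k)}^T\hat w$ for some vector $\hat w$ (in the half-space/difficult case this is $x_{k-1}^*-\hat t\,a$ with $a=A_{r(k)}^Tw_k\in\Rcal(A_{r(k)}^T)\subseteq\Rcal(A^T)$; in the simple affine-subspace case it is $x_{k-1}^*-A_{r(k)}^T\hat w$ by Lemma~\ref{lem:Pi_affine_space_hyperplane}(a); in the dynamic and inexact linesearch variants it is again $x_{k-1}^*-t_kA_{r(k)}^Tw_k$). Since $\Rcal(A_{r(k)}^T)\subseteq\Rcal(A^T)$ and $x_0^*\in\Rcal(A^T)$, an immediate induction shows $x_k^*\in\Rcal(A^T)$ for all $k$, and hence, $\Rcal(A^T)$ being closed, the limit satisfies $x^*\in\Rcal(A^T)$, i.e. $x^*=A^Ty$ for some $y\in\RR^m$.

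Next I would invoke the standard optimality characterization for~\eqref{eq:CP}: since $f$ is convex with full domain and the constraint is the affine set $L(A,b)$, a point $\hat x$ is a minimizer iff $A\hat x=b$ and $0\in\partial f(\hat x)+\Rcal(A^T)$, equivalently iff there exists $x^*\in\partial f(\hat x)$ with $x^*\in\Rcal(A^T)$ (the KKT condition $\partial f(\hat x)\cap\Rcal(A^T)\neq\emptyset$; no constraint qualification issue since the constraint is linear). Both conditions have now been verified — feasibility from Theorem~\ref{thm:convergence}, and $x^*=A^Ty\in\partial f(\hat x)\cap\Rcal(A^T)$ from the subgradient-convergence statement together with the range invariance above — so $\hat x$ solves~\eqref{eq:CP}. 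For the inexact variants the argument is identical, using Theorem~\ref{thm:inexact} in place of Theorem~\ref{thm:convergence} and noting the dual updates there are of the same form $x_k^*=x_{k-1}^*-t_kA_{r(k)}^Tw_k$.

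I expect the only genuinely delicate point to be the optimality characterization: one must make sure the subdifferential sum rule and the duality argument go through for a merely continuous (not differentiable) strongly convex $f$. This is fine because $\dom f=\RR^n$, so $\partial(f+\iota_{L(A,b)})=\partial f+\partial\iota_{L(A,b)}=\partial f+\Rcal(A^T)$ holds without qualification, and strong convexity guarantees the minimizer is unique — so $\hat x$ is in fact \emph{the} solution of~\eqref{eq:CP}. The range-invariance induction and the feasibility part are routine.
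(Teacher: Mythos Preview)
Your proposal is correct and follows essentially the same approach as the paper: show by induction that the dual updates keep $x_k^*\in\Rcal(A^T)$, pass to the limit using closedness of $\Rcal(A^T)$, and then invoke the KKT characterization $A\hat x=b$, $\partial f(\hat x)\cap\Rcal(A^T)\neq\emptyset$ for~\eqref{eq:CP}. The paper compresses this into two sentences, while you spell out the case distinctions for the different update rules and justify the optimality condition via the subdifferential sum rule; your version is more detailed but not genuinely different.
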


\begin{proof}
	Since $x_0^* \in \Rcal(A^T)$ and the updates are of the form $x_k^* = x_{k-1}^* - A^T v_k$ for some $v_k \in \RR^m$, we get $x^*=\lim_{k \to \infty} x_k^* \in \Rcal(A^T)$.
	Hence $\hat{x}=\lim_{k \to \infty} x_k$ fulfills the optimality conditions $A\hat{x}=b$ and $\partial f(\hat x) \cup \Rcal(A^T) \not= \emptyset$ for~\eqref{eq:CP}.
\end{proof}

\subsection{Examples}
\label{sec:examples}

In this section we will see that one can recover several well-known methods and also formulate new ones.

\subsubsection{Minimal error method, Landweber iteration, Kaczmarz's method}
\label{sec:me_l_k}

Consider the linearly constrained minimization problem~\eqref{eq:CP} with $f(x) = \frac{1}{2}\norm[2]{x}^2$.
In this case we have $f^*=f$ and $\nabla f^*(x)=x$ and Bregman projections with respect to $f$ are just orthogonal projections.\\

If we apply algorithm~\ref{alg:BPSFP} with just one constraint $Ax\in Q = \{b\}$ and exact linesearch then a simple calculation shows that the iteration reads as
\ben
x_{k}  =  x_{k-1} - \frac{\|Ax_{k-1} - b\|_2^2}{\|A^T(Ax_{k-1} - b)\|_2^2} \cdot A^T(Ax_{k-1} - b) \,.
\een
This is the so-called \emph{minimal error method}~\cite[Section 3.4]{KNS08}.
Note that in this case the exact stepsize $t_k=\frac{\|Ax_{k-1} - b\|_2^2}{\|A^T(Ax_{k-1} - b)\|_2^2}$ coincides with the ``dynamic stepsize'' $\tilde{t}_k$ from algorithm~\ref{alg:BPSFPdynamic}.
Using a fixed stepsize $t_k = 1/\norm[2]{A}^2$ leads to the well known Landweber method~\cite{Lan51}.\\

Instead we can also regard the constraint $Ax=b$ as an intersection of several ``smaller'' linear constraints as in corollary~\ref{cor:conv-lin-eq}.
Using just orthogonal projections onto the hyperplanes $H(a_i,b_i)$, where by $a_i$ we denote the $i$-th row of $A$, the resulting iteration is
\ben
   x_{k} = x_{k-1} - \frac{\scp{a_{r(k)}}{x_{k-1}} - b_{r(k)}}{\norm[2]{a_{r(k)}}^2}a_{r(k)}^T \,.
\een
This is known as the Kaczmarz method~\cite{K37} and also under the name ``algebraic reconstruction technique'' in the tomography community~\cite{GBH70}.

\subsubsection{Linearized Bregman method and ``sparse'' Kaczmarz}
\label{sec:linbreg}
We also recover the linearized Bregman method.
To that end, consider~\eqref{eq:CP} with $f(x) = \lambda\norm[1]{x} + \tfrac{1}{2}\norm[2]{x}^2$.
Its subgradient is given by
\ben
\partial f(x)_i =
\begin{cases}
  \lambda\sign(x_i) + x_i &, x_i\neq 0\\
  [-\lambda,\lambda] &, x_i = 0
\end{cases}.
\een
By subgradient inversion we get $\nabla f^*(x^*) = S_{\lambda}(x^*)$ and $f^*(x^*) = \tfrac{1}{2}\norm[2]{S_{\lambda}(x^*)}^2$.\\

Applying algorithm~\ref{alg:BPSFP} with single constraint $Ax=b$ then leads to the iteration
\bean
x_k^* &=& x_{k-1}^* - t_k A^T(Ax_{k-1} - b) \\
x_k &=& S_{\lambda}(x_{k}^*)
\eean
which is, up to the stepsize $t_k$, precisely the linearized Bregman method.
Note that not only the constant stepsize $t_k = 1/\norm[2]{A}^2$ is allowed, but also the dynamic stepsize $\tilde t_k = \norm[2]{Ax_{k-1} - b}^2/\norm[2]{A^T(Ax_{k-1}-b)}^2$ and the inexact stepsize according to algorithm~\ref{alg:BPSFPinexact} lead to convergent methods (cf.~theorem~\ref{thm:inexact}).
Moreover, in this case it is also tractable to use an exact linesearch according to lemma~\ref{lem:Pi_affine_space_hyperplane} (b), i.e. to solve a minimization problem of the form
\ben
\min_{t \in \RR} g(t)= \tfrac{1}{2}\norm[2]{S_{\lambda}(x^*-t\, a)}^2+ t \, \beta  \,.
\een 
Since in this case $g(t)$ is piecewise quadratic with piecewise linear derivative $g'(t)$ we can even perform an exact linesearch as follows:
At first determine the kinks $0 =:t_0 < t_1 < t_2 < \ldots$ and corresponding slopes $s_l$ and intercepts $b_l$ such that
\[
g'(t)=s_l \cdot t + b_l \quad,\quad t \in [t_{l-1},t_l] \,.
\]
Then a point $\hat{t} \in [t_{l-1},t_l]$ minimizes $g(t)$ if and only if 
\[
g'(\hat{t})=0 \quad \Leftrightarrow \quad s_l\not=0, \hat{t}=\frac{-b_l}{s_l}\quad \mbox{or} \quad s_l=0, b_l=0  \,.
\]
Hence it remains to increase $l=1,2,\ldots$ until $\hat{t}:=\frac{-b_l}{s_l} \in [t_{l-1},t_l]$ or $s_l=0, b_l=0$, in which case we may choose $\hat{t}:=t_{l-1}$ as minimizer.

Note that the dynamic and the exact stepsize have an important advantage over the constant stepsize:
The knowledge of the operator norm $\norm[2]{A}$ is not needed.
Moreover, the dynamic stepsize does not involve any new applications of $A$ or $A^T$ as the needed quantities have to be computed anyway.
Also the overhead for the exact stepsize comparably small.
As we will see in section~\ref{sec:step-sizecomparison}, the dynamic and the exact stepsize perform significantly better than the constant stepsize.

Instead of considering $Ax=b$ as a single constraint we can also adopt the idea of the Kaczmarz method.
Using just the hyperplanes $H(a_i,b_i)$ as in example~\ref{sec:me_l_k} then leads to an iteration of the form
\bean
x_k^* &=& x_{k-1}^* - t_k \cdot a_{r(k)}^T  \\
x_k &=& S_{\lambda}(x_{k}^*) \,,
\eean
and due to the thresholding operation $S_{\lambda}$ we end up with a \emph{``sparse'' Kaczmarz method}.
A different approach to sparse solutions of \emph{over}determined systems by a Kaczmarz method has been proposed recently in~\cite{MY13}.

\subsubsection{Linearized Bregman method for different noise-models}
\label{sec:linbreg-non-gauss}

To tackle the presence of noise in the right hand side of the linear constraint $Ax=b$, i.e. only $b^\delta$ with $\norm[]{b-b^\delta}\le \delta$ is given, we consider the problem
\be
\min_{x \in \RR^n} f(x) \quad \mbox{s.t.} \quad \norm{Ax-b^\delta} \le \delta \,. \label{eq:CPnoisy}
\ee
The choice of the norm in which the constraint is formulated is dictated by the noise characteristics, e.g. the $2$-norm is appropriate for Gaussian noise, the $1$-norm for impulsive noise, and the $\infty$-norm for uniformly distributed noise.
Applying algorithm~\ref{alg:BPSFP} by considering the constraint as
\ben
Ax \in Q = \{y \in \RR^m \:|\: \norm[]{y-b^\delta}\le \delta\}
\een
leads to the simple iteration
\be
\label{eq:BPSFP-nongauss}
\begin{split}
  x_{k}^* &= x_{k-1}^* - t_{k} A^T(Ax_{k-1} - P_Q(Ax_{k-1}))\\
  x_{k} &= S_{\lambda}(x_{k}^*) \,.
\end{split}
\ee
The projections $P_Q$ only involve orthogonal projections onto the respective norm-balls.
This is easy for the $2$-norm,
\ben
Ax - P_Q(Ax) = \max\{0,1-\tfrac{\delta}{\norm[2]{Ax - b^\delta}}\} \cdot (Ax - b^\delta) \,,
\een
and the $\infty$-norm,
\ben
Ax - P_Q(Ax) = S_\delta(Ax - b^\delta) \,,
\een
and fast algorithms are available for projections onto the 1-norm-ball~\cite{DSSC08}.
The iterates $x_k$ are guaranteed to converge to a feasible point, however there is no guarantee that the limit point will be optimal for~\eqref{eq:CPnoisy}.
Nevertheless, as we will see in section~\ref{sec:sparse-rec-non-gaussian} where numerical experiments are reported, the method sometimes returns optimal solutions.

\section{Numerical experiments}
\label{sec:num-experiments}

\subsection{Comparison of stepsizes for the linearized Bregman method}
\label{sec:step-sizecomparison}

We conduct an experiment on the comparison of step-size rules.
We consider the problem of finding sparse solution of equations, i.e. precisely problem~\eqref{eq:sparse-lin-sys}.
For a given matrix $A$ we form recoverable sparse vector $x^\dag$ using L1TestPack~\cite{lorenz2013bpdn}, compute the corresponding right hand side $b=Ax^\dag$ and choose the regularization parameter $\lambda$ such that the solution of~\eqref{eq:sparse-lin-sys} will return the input (cf.~\cite{Sch12}).
We refer the interested reader to~\cite{Sch12,Yin10,FT07,Don06} for further information about when suffiently sparse solutions can be recoverd exactly by~\eqref{eq:sparse-lin-sys}.
For every instance we compare four different step-size rules:
\begin{enumerate}
\item The constant stepsize $t_k = 1/\norm[2]{A}^2$.
\item The ``dynamic stepsize'' $t_k = \frac{\norm[2]{Ax_{k-1}-b}^2}{\norm[2]{A^T(Ax_{k-1}-b)}^2}$.
\item The exact stepsize as described in example~\ref{sec:linbreg}.
\item The Barzilai-Borwein stepsize with non-monotone linesearch proposed in~\cite{Yin10}\footnote{Code available at \url{http://www.caam.rice.edu/~optimization/linearized_bregman/line_search/lbreg_bbls.html}, accessed August 27th, 2013.}.
\end{enumerate}
The results are summarized in figures~\ref{fig:stepsize-1}--~\ref{fig:stepsize-3}. 
Note that the constant stepsize is always the slowest (however, in figure~\ref{fig:stepsize-3}, the dynamic stepsize is basically equal, due to the structure of the matrix).
The exact stepsize performs better than the dynamic stepsize and even outperform the Barzilai-Borwein stepsize with non-monotone linesearch (cf.~figure~\ref{fig:stepsize-2}).

\begin{figure}[htb]
  \centering
  \begin{tikzpicture}[yscale=0.16,xscale=0.012]
    \draw[->] (0,-10) -- (480,-10)node[below]{$k$};
    \draw[->] (0,-10) -- (0,2)node[left]{\tiny $\norm[2]{Ax_k-b}$};
    \foreach \i in {-8,-4,0}{
      \draw (3,\i) -- (-3,\i)node[left]{\tiny$10^{\i}$};
    }
    \foreach \i in {100,200,300,400}{
      \draw (\i,-9.5) -- (\i,-10.5)node[below]{\tiny$\i$};
    }
    
    \draw[blue!80!black,dashed] plot file {data/linbreg_fixed_resgauss_gauss1000x2000_s60.dat};
    \draw[red!80!black,dotted] plot file {data/linbreg_bbls_resgauss_gauss1000x2000_s60.dat};
    \draw[green!60!black] plot file {data/linbreg_exact_resgauss_gauss1000x2000_s60.dat};
    \draw[white!50!black] plot file {data/linbreg_dyn_resgauss_gauss1000x2000_s60.dat};

    \draw[blue!80!black,dashed,thick] (0,-13.5) -- (50,-13.5)node[right]{\scriptsize constant stepsize};
    \draw[white!50!black,thick] (0,-15) -- (50,-15)node[right]{\scriptsize dynamic};
    \draw[green!60!black,thick] (300,-13.5) -- (350,-13.5)node[right]{\scriptsize exact};
    \draw[red!80!black,dotted,thick] (300,-15) -- (350,-15)node[right]{\scriptsize bbls}; 
    
  \end{tikzpicture}

  \caption{Stepsize comparison. $A\in\RR^{1000\times 2000}$ random Gaussian matrix, $x^\dag$ with 60 non-zero entries, random Gaussian entries.}
  \label{fig:stepsize-1}
\end{figure}
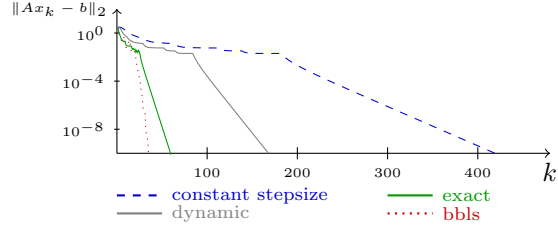

\begin{figure}[htb]
  \centering
  \begin{tikzpicture}[yscale=0.16,xscale=0.012]
    \draw[->] (0,-10) -- (540,-10)node[below]{$k$};
    \draw[->] (0,-10) -- (0,2)node[left]{\tiny $\norm[2]{Ax_k-b}$};
    \foreach \i in {-8,-4,0}{
      \draw (3,\i) -- (-3,\i)node[left]{\tiny$10^{\i}$};
    }
    \foreach \i in {100,200,300,400,500}{
      \draw (\i,-9.5) -- (\i,-10.5)node[below]{\tiny$\i$};
    }
    
    \draw[blue!80!black,dashed] plot file {data/linbreg_fixed_resbernoulli_bernoulli_2000x6000_s60.dat};
    \draw[red!80!black,dotted] plot file {data/linbreg_bbls_resbernoulli_bernoulli_2000x6000_s60.dat};
    \draw[green!60!black] plot file {data/linbreg_exact_resbernoulli_bernoulli_2000x6000_s60.dat};
    \draw[white!50!black] plot file {data/linbreg_dyn_resbernoulli_bernoulli_2000x6000_s60.dat};

    \draw[blue!80!black,dashed,thick] (0,-13.5) -- (50,-13.5)node[right]{\scriptsize constant stepsize};
    \draw[white!50!black,thick] (0,-15) -- (50,-15)node[right]{\scriptsize dynamic};
    \draw[green!60!black,thick] (300,-13.5) -- (350,-13.5)node[right]{\scriptsize exact};
    \draw[red!80!black,dotted,thick] (300,-15) -- (350,-15)node[right]{\scriptsize bbls}; 
      \end{tikzpicture}

  \caption{Stepsize comparison. $A\in\RR^{2000\times 6000}$ random Bernoulli matrix, $x^\dag$ with 60 non-zero entries, random Bernoulli entries.}
  \label{fig:stepsize-2}
\end{figure}
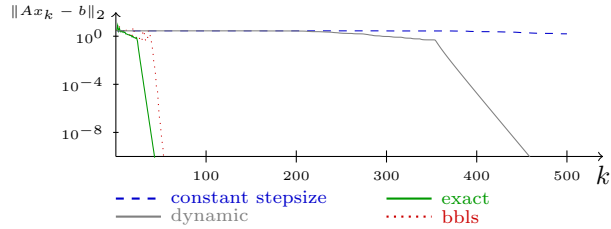

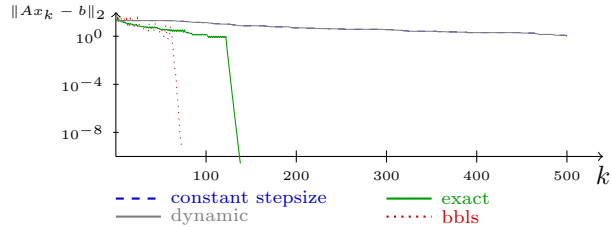
\begin{figure}[htb]
  \centering
  \begin{tikzpicture}[yscale=0.16,xscale=0.012]
    \draw[->] (0,-10) -- (540,-10)node[below]{$k$};
    \draw[->] (0,-10) -- (0,2)node[left]{\tiny $\norm[2]{Ax_k-b}$};
    \foreach \i in {-8,-4,0}{
      \draw (3,\i) -- (-3,\i)node[left]{\tiny$10^{\i}$};
    }
    \foreach \i in {100,200,300,400,500}{
      \draw (\i,-9.5) -- (\i,-10.5)node[below]{\tiny$\i$};
    }
    
    \draw[blue!80!black,dashed] plot file {data/linbreg_fixed_respartdct_dynamic1_2000x6000_s50.dat};
    \draw[red!80!black,dotted] plot file {data/linbreg_bbls_respartdct_dynamic1_2000x6000_s50.dat};
    \draw[green!60!black] plot file {data/linbreg_exact_respartdct_dynamic1_2000x6000_s50.dat};
    \draw[white!50!black] plot file {data/linbreg_dyn_respartdct_dynamic1_2000x6000_s50.dat};
    
    \draw[blue!80!black,dashed,thick] (0,-13.5) -- (50,-13.5)node[right]{\scriptsize constant stepsize};
    \draw[white!50!black,thick] (0,-15) -- (50,-15)node[right]{\scriptsize dynamic};
    \draw[green!60!black,thick] (300,-13.5) -- (350,-13.5)node[right]{\scriptsize exact};
    \draw[red!80!black,dotted,thick] (300,-15) -- (350,-15)node[right]{\scriptsize bbls}; 
    
  \end{tikzpicture}

  \caption{Stepsize comparison. $A\in\RR^{2000\times 6000}$ random partial DCT matrix, $x^\dag$ with 50 non-zero entries, randomly with large dynamic range.}
  \label{fig:stepsize-3}
\end{figure}

\subsection{Sparse recovery with non-Gaussian noise}
\label{sec:sparse-rec-non-gaussian}

To illustrate the performance of the BPSFP algorithms from example~\ref{sec:linbreg-non-gauss}, we consider the problem of recovering sparse solutions of linear equations $Ax=b$, where only noisy data $b^\delta$ is available for different noise-models.

\subsubsection{Impulsive noise}
\label{sec:impuls-noise}

For some matrix $A\in\RR^{1000\times 2000}$ we produce a sparse vector $x^\dag\in\RR^{2000}$ with only 30 non-zero entries and calculate $b = Ax^\dag$.
To obtain $b^\delta$,  100 randomly chosen entries of $b$ are changed to either the value of the largest or smallest entry of $b$ (with equal probability), see figure~\ref{fig:data-impulse-noise}.
We compute $\delta = \norm[1]{b-b^\delta}$ and consider
\be
\label{eq:sparse-impulse-noise}
\min_{x \in \RR^n} \lambda \norm[1]{x} + \tfrac{1}{2}\norm[2]{x}^2\quad\mbox{s.t.}\quad \norm[1]{Ax-b^\delta}\leq \delta \,.
\ee
To obtain an optimal solution we use the primal-dual method from~\cite{CP11}.
The method is based on the saddle point formulation of~\eqref{eq:sparse-impulse-noise}, which in this case reads as
\ben
\min_{x \in \RR^n} \max_{y \in \RR^m} \lambda \norm[1]{x} + \tfrac{1}{2}\norm[2]{x}^2 + \scp{Ax}{y} - \delta\norm[\infty]{y} - \scp{b^\delta}{y} \,.
\een
With $F(x) = \lambda \norm[1]{x} + \tfrac{1}{2}\norm[2]{x}^2$ and $G(y) = \delta\norm[\infty]{y} + \scp{b^\delta}{y}$ one then iterates
\ben
\begin{split}
  x_k & = \prox_{\tau F}(x_{k-1} - \tau A^Ty_{k-1})\\
  y_k & = \prox_{\sigma G}(y_{k-1} + \sigma A(2x_k - x_{k-1})) \,.
\end{split}
\een
In~\cite{CP11} the algorithm is shown to converge to a solution of~\eqref{eq:sparse-impulse-noise} if $\tau\sigma< \norm[2]{A}^{-2}$.
Note that since $\prox_{\sigma G}(y) = y - \sigma\prox_{\sigma^{-1}G^*}(\sigma^{-1}y)$ and
\[
G^*(y) =
\begin{cases}
  0 &,\quad \norm[1]{y-b}\leq\delta\\
  \infty &,\quad \mbox{else}
\end{cases} \,,
\]
we can evaluate $\prox_{\sigma G}$ also by projecting onto the 1-norm-ball (using the method from~\cite{DSSC08}).
The performance of the primal-dual method and the variants of BPSFP is shown in figure~\ref{fig:algorithms-impulse-noise}.
Remarkably the BPSFP with exact linesearch does not only produce an optimal solution of~\eqref{eq:sparse-impulse-noise}, but also does this very fast.
By contrast the dynamic stepsize shows the well know behavior of linearized Bregman methods that the iterates tend to stagnate from time to time.
In fact the algorithm also converges to an optimal solution with high accuracy but needs about 1200 iterations.
We note that in this case, in spite of the noise, the optimal solution is \emph{equal} to the original $x^\dag$ (up to a numerical precision).
%This is a special feature of impulsive noise and the fact that the 1-norm for the residual is perfectly adapted to this case.
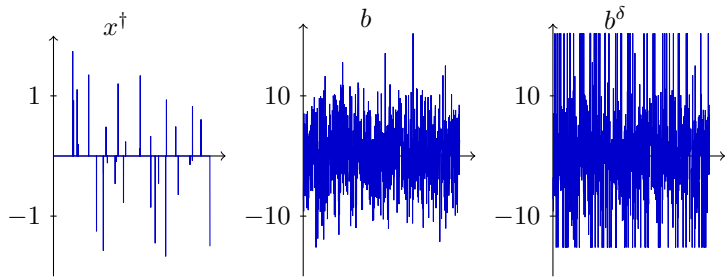
\begin{figure}[bht]
  \centering
  \begin{tikzpicture}[yscale=0.8,xscale=2.08]
    \draw[->] (0,0) -- (1.1,0);
    \draw[->] (0,-2) -- (0,2);
    \foreach \i in {-1,1}{
      \draw (0.02,\i) -- (-0.02,\i)node[left]{$\i$};
    }
    
    \draw[blue!80!black] plot file {data/impulsive_noise/x.dat};
    \draw (0.4,2.2)node{$x^\dag$};
  \end{tikzpicture}
  \begin{tikzpicture}[yscale=0.08,xscale=2.08]
    \draw[->] (0,0) -- (1.1,0);
    \draw[->] (0,-20) -- (0,22);
    \foreach \i in {-10,10}{
      \draw (0.02,\i) -- (-0.02,\i)node[left]{$\i$};
    }
    
    \draw[blue!80!black] plot file {data/impulsive_noise/bdag.dat};
    \draw (0.4,23)node{$b$};
  \end{tikzpicture}
  \begin{tikzpicture}[yscale=0.08,xscale=2.08]
    \draw[->] (0,0) -- (1.1,0);
    \draw[->] (0,-20) -- (0,22);
    \foreach \i in {-10,10}{
      \draw (0.02,\i) -- (-0.02,\i)node[left]{$\i$};
    }
    
    \draw[blue!80!black] plot file {data/impulsive_noise/bdelta.dat};
    \draw (0.4,23)node{$b^\delta$};
  \end{tikzpicture}
  \caption{Data for the experiment of sparse recovery with impulsive noise. Left: sparse vector $x^\dag$, middle: right hand side $b = Ax^\dag$, right: noisy data $b^\delta$, degraded by impulsive noise.}
  \label{fig:data-impulse-noise}
\end{figure}

\begin{figure}[bht]
  \centering
  \begin{tikzpicture}[yscale=0.04,xscale=0.009]
    \draw[->] (0,0) -- (450,0)node[below]{$k$};
    \draw[->] (0,-1) -- (0,80)node[left]{$F(x_k)$};
    \foreach \i in {20,40,60}{
      \draw (5,\i) -- (-5,\i)node[left]{\scriptsize $\i$};
    }
    \foreach \i in {100,200,300,400}{
      \draw (\i,1) -- (\i,-1)node[below]{\scriptsize $\i$};
    }
    
    \draw[red!80!black,dotted] plot file {data/impulsive_noise/fvalPD.dat};
    \draw[white!50!black] plot file {data/impulsive_noise/fvalBPSFPdyn.dat};
    \draw[green!60!black] plot file {data/impulsive_noise/fvalBPSFPexact.dat};
    
  \end{tikzpicture}
  \begin{tikzpicture}[yscale=0.24,xscale=0.009]
    \draw[->] (0,-8) -- (450,-8)node[below]{$k$};
    \draw[->] (0,-8.5) -- (0,5)node[above]{$\norm[1]{Ax_k-b^\delta}-\delta$};
    \foreach \i in {4,0,-4,-8}{
      \draw (5,\i) -- (-5,\i)node[left]{\scriptsize $\i$};
    }
    \foreach \i in {100,200,300,400}{
      \draw (\i,-7.8) -- (\i,-8.2)node[below]{\scriptsize $\i$};
    }
    
    \begin{scope}
      \clip (0,-8) rectangle (450,5);
      \draw[red!80!black,dotted] plot file {data/impulsive_noise/feasPD.dat};
      \draw[white!50!black] plot file
      {data/impulsive_noise/feasBPSFPdyn.dat}; \draw[green!60!black]
      plot file {data/impulsive_noise/feasBPSFPexact.dat};
    \end{scope}
  \end{tikzpicture}\\
  \begin{tikzpicture}[yscale=0.5,xscale=0.8]
    %Legend
    \draw[red!80!black,dotted,thick] (0,0) --+ (1,0)node[right]{\scriptsize primal-dual}; 
    \draw[green!60!black,thick] (3.5,0) --+ (1,0)node[right]{\scriptsize BPSFP exact};
    \draw[white!50!black,thick] (7,0) --+ (1,0)node[right]{\scriptsize BPSFP dynamic};
    
  \end{tikzpicture}
  \caption{Performance of the different algorithms for problem~\eqref{eq:sparse-impulse-noise}. Left: Objective value $F(x) = \lambda \norm[1]{x} + \tfrac{1}{2}\norm[2]{x}^2$, right: feasibility violation in log-scale, i.e. the value $\norm[1]{Ax_k-b^\delta}-\delta$.}
  \label{fig:algorithms-impulse-noise}
\end{figure}

\subsubsection{Uniform noise}
\label{sec:uniform-noise}

In parallel to the previous section we conduct a similar experiment, but now $b^\delta$ is formed by adding uniformly distributed noise with range $[-1,1]$, see figure~\ref{fig:data-uniform-noise}.
We use $\delta = \norm[\infty]{b-b^\delta}$ and consider
\be
\label{eq:sparse-uniform-noise}
\min_{x \in \RR^n} \lambda \norm[1]{x} + \tfrac{1}{2}\norm[2]{x}^2\quad\mbox{s.t.}\quad \norm[\infty]{Ax-b^\delta}\leq \delta \,.
\ee
An optimal solution is again computed with the primal-dual method from~\cite{CP11}.
The performance of the primal-dual method and the variants of BPSFP is shown in Figure~\ref{fig:algorithms-uniform-noise}.
This time the BPSFP algorithms produce feasible but not optimal solutions of~\eqref{eq:sparse-uniform-noise}.
BPSFP with exact stepsize even produces a higher objective value than with the dynamic stepsize.
However, it is remarkable that in terms of the relative reconstruction error, $\text{err}_\text{rel}=\frac{\norm[2]{x-x^\dag}}{\norm[2]{x^\dag}}$, BPSFP with dynamic stepsize produces the lowest value $\text{err}_\text{rel} \approx 0.007$, while the optimal solution leads to $\text{err}_\text{rel} \approx 0.06$ and BPSFP with exact stepsize results in $\text{err}_\text{rel} \approx 0.1$.
Hence the BPSFP methods, although not solving the optimization problem~\eqref{eq:sparse-uniform-noise}, deliver good solutions to the underlying reconstruction problem, cf. Figure~\ref{fig:results-uniform-noise}.
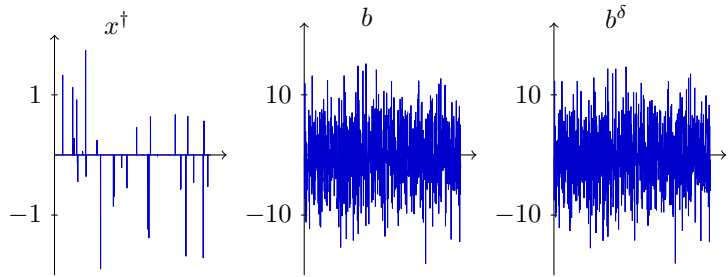
\begin{figure}[bht]
  \centering
  \begin{tikzpicture}[yscale=0.8,xscale=2.08]
    \draw[->] (0,0) -- (1.1,0);
    \draw[->] (0,-2) -- (0,2);
    \foreach \i in {-1,1}{
      \draw (0.02,\i) -- (-0.02,\i)node[left]{$\i$};
    }
    
    \draw[blue!80!black] plot file {data/uniform_noise/x.dat};
    \draw (0.4,2.2)node{$x^\dag$};
  \end{tikzpicture}
  \begin{tikzpicture}[yscale=0.08,xscale=2.08]
    \draw[->] (0,0) -- (1.1,0);
    \draw[->] (0,-20) -- (0,22);
    \foreach \i in {-10,10}{
      \draw (0.02,\i) -- (-0.02,\i)node[left]{$\i$};
    }
    
    \draw[blue!80!black] plot file {data/uniform_noise/bdag.dat};
    \draw (0.4,23)node{$b$};
  \end{tikzpicture}
  \begin{tikzpicture}[yscale=0.08,xscale=2.08]
    \draw[->] (0,0) -- (1.1,0);
    \draw[->] (0,-20) -- (0,22);
    \foreach \i in {-10,10}{
      \draw (0.02,\i) -- (-0.02,\i)node[left]{$\i$};
    }
    
    \draw[blue!80!black] plot file {data/uniform_noise/bdelta.dat};
    \draw (0.4,23)node{$b^\delta$};
  \end{tikzpicture}
  \caption{Data for the experiment of sparse recovery with uniform noise. Left: sparse vector $x^\dag$, middle: right hand side $b = Ax^\dag$, right: noisy data $b^\delta$, degraded by uniform noise.}
  \label{fig:data-uniform-noise}
\end{figure}

\begin{figure}[bth]
  \centering
  \begin{tikzpicture}[yscale=0.04,xscale=0.008]
    \draw[->] (0,0) -- (550,0)node[below]{$k$};
    \draw[->] (0,-1) -- (0,80)node[left]{$F(x_k)$};
    \foreach \i in {20,40,60}{
      \draw (5,\i) -- (-5,\i)node[left]{\scriptsize $\i$};
    }
    \foreach \i/\ii in {125/500,250/1000,375/1500,500/2000}{
      \draw (\i,1) -- (\i,-1)node[below]{\scriptsize $\ii$};
    }
    
    \draw[red!80!black,dotted] plot file {data/uniform_noise/fvalPD.dat};
    \draw[white!50!black] plot file {data/uniform_noise/fvalBPSFPdyn.dat};
    \draw[green!60!black] plot file {data/uniform_noise/fvalBPSFPexact.dat};
    
  \end{tikzpicture}
  \begin{tikzpicture}[yscale=0.32,xscale=0.008]
    \draw[->] (0,-8) -- (550,-8)node[below]{$k$};
    \draw[->] (0,-8.5) -- (0,2)node[above]{$\norm[\infty]{Ax_k-b^\delta}-\delta$};
    \foreach \i in {0,-2,-4,-6,-8}{
      \draw (5,\i) -- (-5,\i)node[left]{\scriptsize $10^{\i}$};
    }
    \foreach \i/\ii in {125/500,250/1000,375/1500,500/2000}{
      \draw (\i,-7.9) -- (\i,-8.1)node[below]{\scriptsize $\ii$};
    }
    
    \begin{scope}
      \clip (0,-8) rectangle (550,5);
      \draw[red!80!black,dotted] plot file {data/uniform_noise/feasPD.dat};
      \draw[white!50!black] plot file
      {data/uniform_noise/feasBPSFPdyn.dat}; \draw[green!60!black]
      plot file {data/uniform_noise/feasBPSFPexact.dat};
    \end{scope}
  \end{tikzpicture}\\
  \begin{tikzpicture}[yscale=0.5,xscale=0.8]
    %Legend
    \draw[red!80!black,dotted,thick] (0,0) --+ (1,0)node[right]{\scriptsize primal-dual}; 
    \draw[green!60!black,thick] (3.5,0) --+ (1,0)node[right]{\scriptsize BPSFP exact};
    \draw[white!50!black,thick] (7,0) --+ (1,0)node[right]{\scriptsize BPSFP dynamic};
    
  \end{tikzpicture}
  \caption{Performance of the different algorithms for problem~\eqref{eq:sparse-uniform-noise}. Left: Objective value $F(x) = \lambda\norm[1]{x} + \tfrac{1}{2}\norm[2]{x}^2$, right: feasibility violation in log-scale, i.e. the value $\norm[\infty]{Ax_k-b^\delta}-\delta$.}
  \label{fig:algorithms-uniform-noise}
\end{figure}

\begin{figure}[htb]
  \centering
  \begin{tabular}{ccc}
    \begin{tikzpicture}[yscale=1,xscale=2.6]
      \draw[->] (0,0) -- (1.1,0);
      \draw[->] (0,-2) -- (0,2);
      \foreach \i in {-1,1}{
        \draw (0.02,\i) -- (-0.02,\i)node[left]{$\i$};
      }
      
      \draw[blue!80!black] plot file {data/uniform_noise/xPD.dat};
    \end{tikzpicture}
    &
    \begin{tikzpicture}[yscale=1,xscale=2.6]
      \draw[->] (0,0) -- (1.1,0);
      \draw[->] (0,-2) -- (0,2);
      \foreach \i in {-1,1}{
        \draw (0.02,\i) -- (-0.02,\i)node[left]{$\i$};
      }
      
      \draw[blue!80!black] plot file {data/uniform_noise/xBPSFPdyn.dat};
    \end{tikzpicture}
    &
    \begin{tikzpicture}[yscale=1,xscale=2.6]
      \draw[->] (0,0) -- (1.1,0);
      \draw[->] (0,-2) -- (0,2);
      \foreach \i in {-1,1}{
        \draw (0.02,\i) -- (-0.02,\i)node[left]{$\i$};
      }
      
      \draw[blue!80!black] plot file {data/uniform_noise/xBPSFPexact.dat};
    \end{tikzpicture}\\
    \scriptsize
    optimal (with primal dual)
    &
    \scriptsize
    BPSFP dynamic
    &
    \scriptsize
    BPSFP exact
  \end{tabular}
  \caption{Reconstructions of $x^\dag$ in the problem of uniform noise of section~\ref{sec:uniform-noise}.}
  \label{fig:results-uniform-noise}
\end{figure}
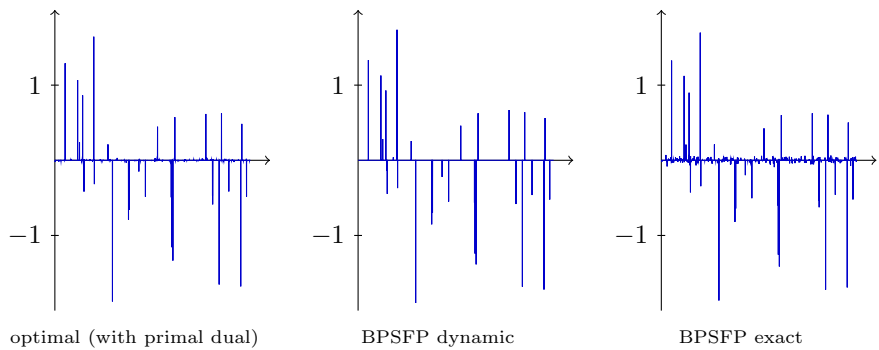

\subsection{Three dimensional reconstruction of planetary nebulae}
\label{sec:nebulae}

Here we describe a real-world, large scale application which can be modelled in the form
\ben
\min_{x\in\RR^n} f(x)\quad\text{s.t.}\quad Ax=b\,,\,x \ge 0 \,,
\een
namely the three dimensional reconstruction of planetary nebulae from a single two dimensional observation.
Planetary nebulae are formed when dying stars eject their matter, forming colorfully glowing gaseous clouds.
They impress and inspire due to their beauty and rich three dimensional structure, and therefore have been studied and catalogued for centuries.
However, due to the large distance of the nebulae to Earth only one single projection of each nebula is, and will be, available.
For the popular educational shows in planetariums, three-dimensional models of nebulae are often created manually, requiring specialized skills and a lot of time.
Automatic reconstruction of three dimensional models of nebulae has been proposed in~\cite{WAGLTM12}.
There a method based on virtual views and tomographic reconstruction has been applied.
However the method requires long computation times.
We follow the more recent proposal in~\cite{WLM13}.
The image of an astronomical nebula can be seen as the integrated emission intensity along the view rays.
In other words the pixel value $s_{i,j}$ of an image of a nebula is formed summing up the intensity values of the sought-after rendering volume $\rho \ge 0$ along one coordinate axis, i.e.
\ben
s_{i,j} = \sum_{k}\rho_{i,j,k}.
\een
We write this with the linear projection operator $P$ as
\ben
s = P\rho.
\een
Among the solutions of this highly underdetermined equation we try to extract the one which fits best to further prior knowledge of the nebula.
Our basic model assumption is that the nebula under consideration obeys some sort of symmetry, e.g. a rotational symmetry which could be guessed for the so-called Butterfly Nebula in figure~\ref{fig:M2-9}.
\begin{figure}[htb]
  \centering
  \includegraphics[width=7cm]{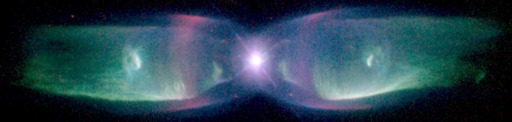}\\\medskip
  \includegraphics[width=7cm]{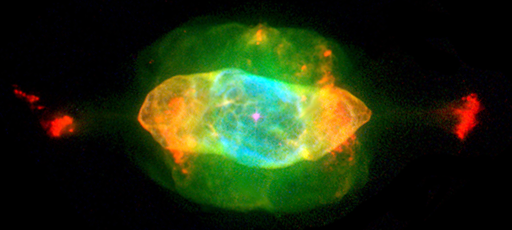}
  
  \caption{Top: The Butterfly Nebula (planetary nebula M2-9), image source: Bruce Balick (University of Washington), Vincent Icke (Leiden University), Garrelt Mellema (Stockholm University), and NASA.
  Bottom: The Saturn Nebula (planetary nebula NGC7009), image source: Bruce Balick (University of Washington), Jason Alexander (University of Washington),
Arsen Hajian (U.S. Naval Observatory),
Yervant Terzian (Cornell University),
Mario Perinotto (University of Florence (Italy)),
Patrizio Patriarchi (Arcetri Observatory (Italy)), and
NASA }
  \label{fig:M2-9}
\end{figure}
Hence we demand that the intensity in the reconstructed volume is close to constant along circles around the symmetry axis.
Moreover the volume is almost empty or, in other words, sparse.
This motivates a mixed $\ell^{1,\infty}$ term: We collect the pixels which belong to a circle around the symmetry axis in a set $G_l$ of indices and form
\ben
f(\rho) = \sum_l |G_l|\max_{(i,j,k)\in G_l}|\rho_{i,j,k}| \,.
\een
Note that the sum is weighted with $|G_l|$, i.e. with the number of pixels in the set $G_l$.
Since $f$ is not strongly convex, we regularize it as proposed at the end of section~\ref{sec:assumptions} and obtain the problem
\be
\label{eq:nebulae-problem}
\min_\rho \lambda f(\rho) + \tfrac{1}{2}\norm[2]{\rho}^2,\quad\text{s.t.}\quad P\rho = s \,,\, \rho \ge 0.
\ee
To employ BPSFP, we incorporate the non-negativity constraint into the function $f$.
Then, the associated proximal mapping amounts to separate proximal mappings for the $\infty$-norm on each group $G_i$.
These proximal mappings can be calculated with the help of projections onto the simplex (see~\cite{WLM13} for a more detailed description).
Figure~\ref{fig:M2-9-rec} shows some results for the Butterfly nebula after 100 steps of the BPSFP method with the constant stepsize and $\lambda = 10$ (note that in this case the dynamic stepsize is equal to the constant stepsize due to the special structure of the adjoint operator $P^T$).
In this example each color channel has been processed independently.
The projected image of the nebula consists of $122\times 512$ pixels and the reconstructed volume has $122\times 122\times 512$ voxels, i.e. the total number of variables for each color channel is about $7.6$ million.
The total run time for each color channel on an Intel\textsuperscript{\textregistered} Core\textsuperscript{\texttrademark} i7 CPU 960 3.20GHz was about 10 minutes.
\begin{figure}[htb]
  \centering
  \includegraphics[width=2.7cm]{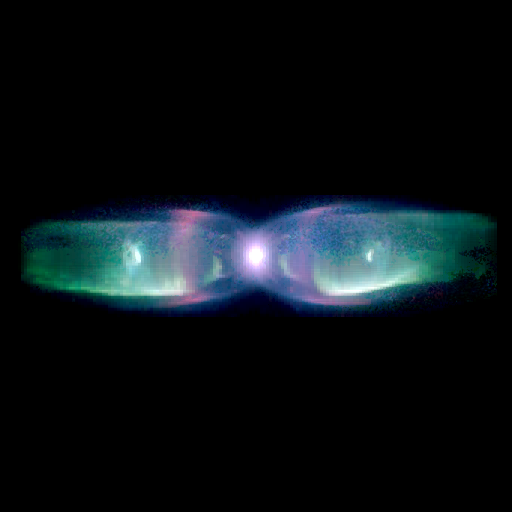}
  \includegraphics[width=2.7cm]{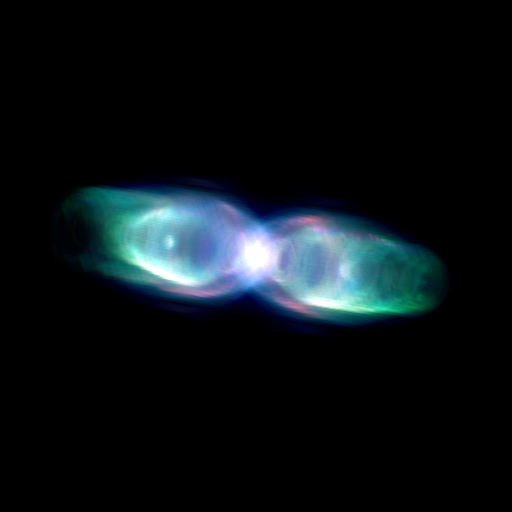} %\\\medskip
  \includegraphics[width=2.7cm]{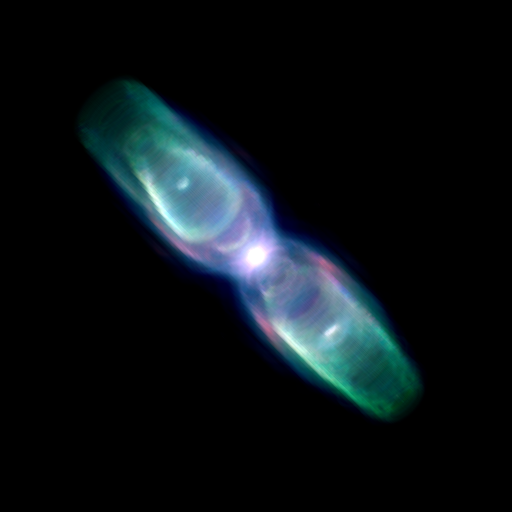}
  \includegraphics[width=2.7cm]{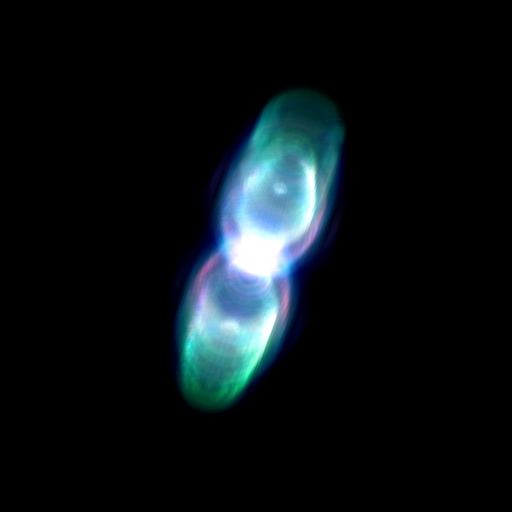}\\
  \begin{tikzpicture}[yscale=1.5,xscale=0.05]
    \draw[->] (0,-0.5) -- (110,-0.5)node[below]{\scriptsize $k$};
    \draw[->] (0,-0.7) -- (0,1.2)node[left]{\scriptsize $\tfrac{\norm[2]{P\rho_k-s}}{\norm[2]{s}}$};
    \foreach \i in {0,1}{
      \draw (1,\i) -- (-1,\i)node[left]{\tiny$10^{\i}$};
    }
    \foreach \i in {50,100}{
      \draw (\i,-0.485) -- (\i,-0.515)node[below]{\tiny$\i$};
    }
    
    \draw[red!80!black,dotted] plot file {data/M2-9/M2-9-red.dat};
    \draw[blue!80!black,dashed] plot file {data/M2-9/M2-9-blue.dat};
    \draw[green!60!black] plot file {data/M2-9/M2-9-green.dat};
  \end{tikzpicture}
  \caption{Top left: Reconstructed front view of the Saturn Nebula (compare with Figure~\ref{fig:M2-9}). Other pictures: New views reconstructed with the BPSFP method for a weighted $\ell^{1,\infty}$ term. Bottom: Logarithmic plot of the relative residual norms for all color channels (red: dotted, green: solid, blue: dashed).}
  \label{fig:M2-9-rec}
\end{figure}
Figure~\ref{fig:NGC7009-rec} shows some results for the Saturn Nebula (with the same algorithmic parameters).
In this example, the projected image of the nebula consists of $230\times 512$ pixels and the reconstructed volume has $230\times 230\times 512$ voxels, i.e. the total number of variables is about $27$ million.
The total run time for each color channel was about 20 minutes.
\begin{figure}[htb]
  \centering
  \includegraphics[width=2.7cm]{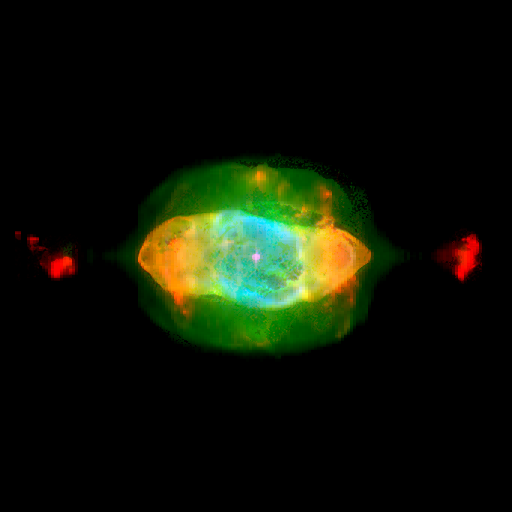}
  \includegraphics[width=2.7cm]{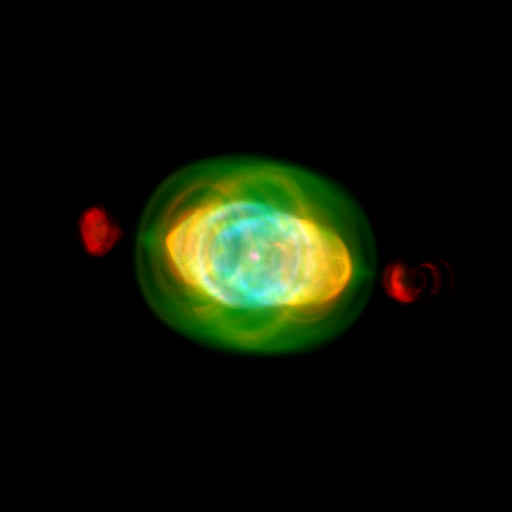} %\\\medskip
  \includegraphics[width=2.7cm]{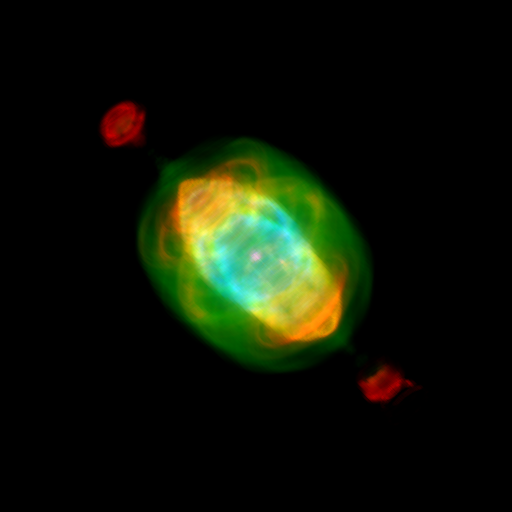}
  \includegraphics[width=2.7cm]{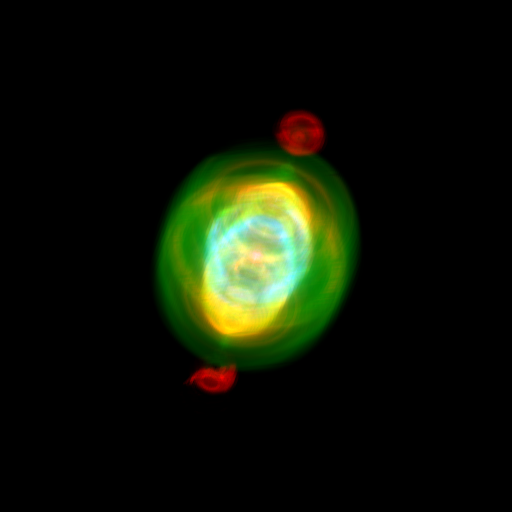}\\
  \begin{tikzpicture}[yscale=1.5,xscale=0.05]
    \draw[->] (0,0) -- (110,0)node[below]{\scriptsize $k$};
    \draw[->] (0,-0.2) -- (0,1.2)node[left]{\scriptsize $\tfrac{\norm[2]{P\rho_k-s}}{\norm[2]{s}}$};
    \foreach \i in {0,1}{
      \draw (1,\i) -- (-1,\i)node[left]{\tiny$10^{\i}$};
    }
    \foreach \i in {50,100}{
      \draw (\i,0.015) -- (\i,-0.015)node[below]{\tiny$\i$};
    }
    
    \draw[red!80!black,dotted] plot file {data/NGC7009/NGC7009-red.dat};
    \draw[blue!80!black,dashed] plot file {data/NGC7009/NGC7009-blue.dat};
    \draw[green!60!black] plot file {data/NGC7009/NGC7009-green.dat};
  \end{tikzpicture}
  \caption{Top left: Reconstructed front view of the Saturn Nebula (compare with Figure~\ref{fig:M2-9}). Other pictures: New views. Bottom: Logarithmic plot of the relative residual norms for all color channels (red: dotted, green: solid, blue: dashed).}
  \label{fig:NGC7009-rec}
\end{figure}

\subsection{$TV$ tomographic reconstruction with additional constraints}
\label{sec:tv-tomography}

The proposed BPSFP framework can handle multiple constraints and in this section we present such an example.
We consider a tomographic reconstruction problem. There one wants to reconstruct a two-dimensional function $u$ from its line-integrals (see~\cite{N86}).
In discretized form one has measurements $b^\delta\in\RR^m$ obtained by $Au^\dag = b$ with additional noise.
The matrix $A$ contains one row for every discretized line integral and one column for each pixel which is to be reconstructed.
As a matter of fact, this matrix is usually very sparse and also non-negative.
Moreover, the data $b^\delta$ is non-negative and also the unknown solution $u^\dag$ is non-negative.

To approximately solve the system $Ax=b^\delta$ in the underdetermined regime (i.e. less measurements than pixels) one often requires an additional regularization term and a popular choice is total variation regularization~\cite{SP08}.
In discrete form, the total variation is the 1-norm of the absolute value of the discrete gradient $\nabla u$ (i.e. the array of pointwise finite difference approximations of the partial derivatives).
If we assume that the noise level is known (approximately), i.e. the quantity $\delta = \norm[2]{b-b^\delta}$, then we consider the minimization problem
\ben
\min_u \norm[1]{|\nabla u|}\quad\text{s.t.}\quad \norm[2]{Au-b^\delta}\leq \delta.
\een
To apply the BPSFP framework we make two minor adaptions: First we introduce another variable $p$ and another constraint to obtain the equivalent problem
\ben
\begin{split}
  \min_{u,p} \norm[1]{|p|}\quad\text{s.t.}\quad
  \norm[2]{Au-b^\delta}&\leq \delta\\
   \nabla u &= p.
\end{split}
\een
Still, the objective is not strongly convex and we regularize the problem, as proposed at the end of section~\ref{sec:assumptions}, by adding the squared 2-norm of the variables.
With a further weight factor $\lambda>0$ we obtain
\begin{equation}
  \label{eq:TV-plain}
  \begin{split}
    \min_{u,p} \lambda\norm[1]{|p|} + \tfrac12\Big(\norm[2]{u}^2 +
    \norm[2]{|p|}^2\Big)\quad\text{s.t.}\quad
    \norm[2]{Au-b^\delta}&\leq \delta\\
    \nabla u &= p.
  \end{split}
\end{equation}
Now, the BPSFP framework can be applied straightforward: We consider the two constraints both as difficult constraints and treat them alternatingly.

The model can be enhanced further by incorporating more prior knowledge:
Since non-negativity is known, we can enforce this by adding a simple constraint
\begin{equation}
  \label{eq:TV-pos}
  \begin{split}
    \min_{u,p} \lambda\norm[1]{|p|} + \tfrac12\Big(\norm[2]{u}^2 +
    \norm[2]{|p|}^2\Big)\quad\text{s.t.}\quad
    \norm[2]{Au-b^\delta}&\leq \delta\\
    \nabla u &= p\\
    u & \geq 0.
  \end{split}
\end{equation}
This new constraint can be treated with almost no additional effort:
Just one Bregman projection onto the non-negative orthant is needed in every iteration.

In the special case of tomography one can assume that even more prior knowledge is available.
The data consists of parallel projections of the density from different angles. 
Since the projections amount to line integrals, and the parallel lines usually cover the whole region of interest, we obtain, by non-negativity of $u^\dag$, that the 1-norm of each parallel projection in $b^\delta$ is a good estimator of the one-norm of the solution.
Averaging over all these estimators provided by all angles further increases the accuracy.
Hence, we assume that an additional constraint $\norm[1]{u} = c$ is known and by non-negativity of $u$ we formulate this with the all-ones vector $\mathbbm{1}$ as
\begin{equation}
  \label{eq:TV-one}
  \begin{split}
    \min_{u,p} \lambda\norm[1]{|p|} + \tfrac12\Big(\norm[2]{u}^2 +
    \norm[2]{|p|}^2\Big)\quad\text{s.t.}\quad
    \norm[2]{Au-b^\delta}&\leq \delta\\
    \nabla u &= p\\
    u & \geq 0.\\
    \mathbbm{1}^Tu & = c
  \end{split}
\end{equation}
This additional simple constraint, indeed a hyperplane constraint, can be handled efficiently by another additional Bregman projection per iteration.

For a numerical example, we considered a phantom $u^\dag$ of $128\times 128$ pixels, 18 parallel projections (at angles $0^\circ,10^\circ,\dots,170^\circ$), each consisting of 136 parallel lines\footnote{We used the AIRtools package v1.0~\cite{HS12}, obtained from \url{http://www2.imm.dtu.dk/~pcha/AIRtools/} to build the data and the projection matrix.}.
This amounts to 2.448 measurements for 16.384 variables.
We produced data $b^\delta$ with 5\% Gaussian noise and set $\delta$ to the noise level.
We chose $\lambda=10$ and applied BPSFP with the dynamic stepsize for all three problems~\eqref{eq:TV-plain},~\eqref{eq:TV-pos}, and~\eqref{eq:TV-one}.
Figure~\ref{fig:tv-tomography} shows the result of BPSFP after 1.200 iterations.
Note that the incorporation of more prior knowledge does indeed increase the reconstruction quality.
Moreover, the additional constraints do not increase the computational time since the Bregman projections onto both constraints are simple and quick.

\begin{figure}[htb]
  \centering
  \begin{tabular}{cccc}
    \includegraphics[width=2.7cm]{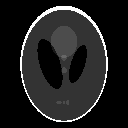} &
    \includegraphics[width=2.7cm]{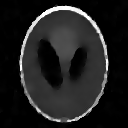} &
    \includegraphics[width=2.7cm]{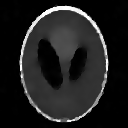} &
    \includegraphics[width=2.7cm]{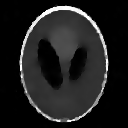}\\
    \small
    $u^\dag$&
    $\norm[2]{u^\dag - u^{\text{rec}}} = 10.4$ &
    $\norm[2]{u^\dag - u^{\text{rec}}} = 9.4$ & 
    $\norm[2]{u^\dag - u^{\text{rec}}} = 9.2$\\
  \end{tabular}
  \caption{From left to right: Original phantom.  Reconstruction with smal total variation~\eqref{eq:TV-plain}.  Additional positivity constraint~\eqref{eq:TV-pos}.  Positivity constraint and constraint for the 1-norm~\eqref{eq:TV-one}.}
  \label{fig:tv-tomography}
\end{figure}

Figure~\ref{fig:tv-errors} shows the violations of the constraints throughout the iterations for the BPSFP method for problems~\eqref{eq:TV-plain},~\eqref{eq:TV-pos}, and~\eqref{eq:TV-one}.
Note that the additional constraints also lead to slightly quicker reductions of the constraint violations and that the additional constraint one the 1-norm of the solution has a strong smoothing effect on the iteration.

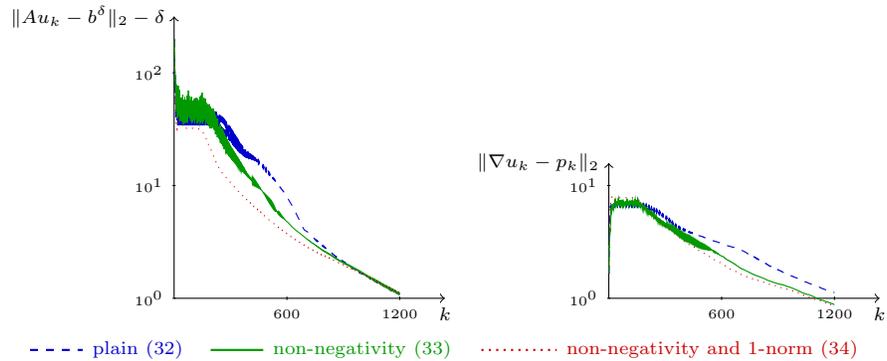
\begin{figure}[htb]
  \centering
  \begin{tikzpicture}[yscale=1.5,xscale=3]
    \draw[->] (0,0) -- (1.2,0)node[below]{\scriptsize $k$};
    \draw[->] (0,0) -- (0,2.5)node[left]{\scriptsize $\norm[2]{Au_k-b^\delta} - \delta$};
    \foreach \i in {0,1,2}{
      \draw (0.007,\i) -- (-0.007,\i)node[left]{\tiny$10^{\i}$};
    }
    \foreach \i/\ii in {0.5/600,1/1200}{
      \draw (\i,0.015) -- (\i,-0.015)node[below]{\tiny$\ii$};
    }
    \draw[blue!80!black,dashed] plot file {data/tomography/res1_plain.dat};
    \draw[green!60!black] plot file {data/tomography/res1_pos.dat};
    \draw[red!80!black,dotted] plot file {data/tomography/res1_one.dat};
  \end{tikzpicture}
  \begin{tikzpicture}[yscale=1.5,xscale=3]
    \draw[->] (0,0) -- (1.2,0)node[below]{\scriptsize $k$};
    \draw[->] (0,0) -- (0,1.2)node[left]{\scriptsize $\norm[2]{\nabla u_k-p_k}$};
    \foreach \i in {0,1}{
      \draw (0.007,\i) -- (-0.007,\i)node[left]{\tiny$10^{\i}$};
    }
    \foreach \i/\ii in {0.5/600,1/1200}{
      \draw (\i,0.015) -- (\i,-0.015)node[below]{\tiny$\ii$};
    }
    \draw[blue!80!black,dashed] plot file {data/tomography/res2_plain.dat};
    \draw[green!60!black] plot file {data/tomography/res2_pos.dat};
    \draw[red!80!black,dotted] plot file {data/tomography/res2_one.dat};
  \end{tikzpicture}\\
  \begin{tikzpicture}
    %Legend
    \draw[blue!80!black,dashed,thick] (0,0) --+ (0.7,0)node[right]{\scriptsize plain~\eqref{eq:TV-plain}}; 
    \draw[green!60!black,thick] (2.4,0) --+ (0.7,0)node[right]{\scriptsize  non-negativity~\eqref{eq:TV-pos}};
    \draw[red!80!black,dotted,thick] (6,0) --+ (1,0)node[right]{\scriptsize  non-negativity and 1-norm~\eqref{eq:TV-one}};
  \end{tikzpicture}
  \caption{Violations of the constraints for the three variants of
    $TV$ tomographic reconstruction.}
  \label{fig:tv-errors}
\end{figure}

\bibliography{./literature}
\bibliographystyle{plain}

\end{document}